\newcommand{\GG}{{\cal G}}
\newtheorem{conjecture}{Conjecture}
\newtheorem{theorem}{Theorem}
\newtheorem{corollary}[theorem]{Corollary}
\newtheorem{lemma}[theorem]{Lemma}
\DeclareMathOperator{\wcol}{\mathrm{wcol}}
\DeclareMathOperator{\adm}{\mathrm{adm}}
\title{On weighted sublinear separators}
\author{Zdeněk Dvořák\thanks{Computer Science Institute, Charles University, Prague, Czech Republic. E-mail: {\tt rakdver@iuuk.mff.cuni.cz}.
Supported by the ERC-CZ project LL2005 (Algorithms and complexity within and beyond bounded expansion) of the Ministry of Education of Czech Republic.}}
\date{}
\begin{document}
\maketitle

\begin{abstract}
Consider a graph $G$ with an assignment of costs to vertices.  Even if $G$ and all its subgraphs admit balanced separators
of sublinear \emph{size}, $G$ may only admit a balanced separator of sublinear \emph{cost} after deleting a small set $Z$ of exceptional
vertices.  We improve the bound on $|Z|$ from $O(\log |V(G)|)$ to $O(\log\log\ldots\log |V(G)|)$, for any fixed number of
iterations of the logarithm.
\end{abstract}

A \emph{balanced separator} in a graph $G$ is a set $C\subseteq V(G)$ such that each component of $G-C$ has at most $\tfrac{2}{3}|V(G)|$
vertices.  A hereditary class of graphs $\GG$ has \emph{strongly sublinear separators} if there exist $c,\varepsilon>0$
such that every graph $G\in\GG$ has a balanced separator of size at most $c|V(G)|^{1-\varepsilon}$.
Famously, planar graphs~\cite{lt79} and more generally graphs without a fixed graph as a minor~\cite{alon1990separator},
as well as many geometrically defined graph classes~\cite{miller1997separators,miller1998geometric} have strongly sublinear
separators, and this property has important algorithmic applications~\cite{lt80}.

Building upon a result of Plotkin et al.~\cite{plotkin}, Dvořák and Norin~\cite{dvorak2016strongly} established a connection to
shallow minors.  For an integer $\ell\ge 0$, a \emph{depth-$\ell$ minor} of a graph $G$ is a graph obtained
from a subgraph of $G$ by contracting pairwise vertex-disjoint subgraphs, each of radius at most $\ell$.  The \emph{density}
of a graph $H$ is $|E(H)|/|V(H)|$, and we define $\nabla_\ell(G)$ as the maximum density of a depth-$\ell$ minor of $G$.
For a function $f:\mathbb{N}\to\mathbb{N}$, we say the \emph{expansion of $G$ is bounded by $f$} if $\nabla_\ell(G)\le f(\ell)$ for every
integer $\ell\ge 0$.  We say that a class of graphs $\GG$ has \emph{polynomial expansion} if there exists a polynomial $p$ bounding
the expansion of all graphs in $\GG$.
\begin{theorem}[Dvořák and Norin~\cite{dvorak2016strongly}]
A hereditary class $\GG$ of graphs has strongly sublinear separators if and only if it has polynomial expansion.
\end{theorem}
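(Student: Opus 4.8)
\medskip
\noindent\emph{Proof proposal.}
I would prove the two implications separately, taking the easier one first.

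\medskip
\noindent\emph{Polynomial expansion $\Rightarrow$ strongly sublinear separators.}
Let $p$ be a polynomial, of degree $k$, bounding the expansion of every $G\in\GG$. A depth-$\ell$ minor isomorphic to $K_h$ has density $(h-1)/2$, so no $G\in\GG$ has $K_h$ as a depth-$\ell$ minor once $h>2p(\ell)+1$; moreover $\nabla_0(G)\le p(0)$ gives $|E(G)|\le p(0)|V(G)|$. Put $h(\ell):=\lceil 2p(\ell)+2\rceil$. By the theorem of Plotkin et al.~\cite{plotkin}, a graph on $n$ vertices with $m$ edges and no $K_{h}$ as a depth-$\ell$ minor has a balanced separator of size $O\!\left(\tfrac{m}{\ell}+\ell h^2\log n\right)$; applied to $G\in\GG$ with $h=h(\ell)$ this gives a balanced separator of size $O\!\left(\tfrac{n}{\ell}+\ell^{\,2k+1}\log n\right)$, the constant depending only on $p$. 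Choosing $\ell:=\lceil n^{1/(2k+2)}\rceil$ makes both terms $O\!\left(n^{1-1/(2k+2)}\log n\right)$, which is at most $c\,n^{1-\varepsilon}$ for any fixed $\varepsilon<1/(2k+2)$ and a suitable $c$; this is exactly strongly sublinear separators.

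\medskip
\noindent\emph{Strongly sublinear separators $\Rightarrow$ polynomial expansion (first phase).}
Assume there are $c,\varepsilon>0$ so that every graph in $\GG$ — and hence, by intersecting a separator of $G[V(H)]$ with $V(H)$, every subgraph $H$ of every $G\in\GG$ — has a balanced separator of size at most $c\,N^{1-\varepsilon}$ on $N$ vertices. By the standard fact that polynomial expansion is equivalent to the maximum order of a clique appearing as a depth-$O(\ell)$ minor of graphs in $\GG$ being polynomially bounded in $\ell$ (this also handles $\ell=0$, since $\nabla_0\le\nabla_1$), it suffices to bound, for each $\ell$, the largest $h$ for which some $G\in\GG$ has $K_h$ as a depth-$\ell$ minor. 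Suppose $K_h$ is a depth-$\ell$ minor of $G$. The plan is to iterate balanced separators: if a subgraph $J$ of $G$ realises $K_t$ as a depth-$\ell$ minor with pairwise disjoint branch sets $B_1,\dots,B_t$ and $N':=|V(J)|$, then deleting a balanced separator $C$ of $J$ (of size at most $c(N')^{1-\varepsilon}$) leaves the at least $t-|C|$ branch sets disjoint from $C$ inside a single component $D$ of $J-C$: each such $B_i$ stays connected, and for any two of them the edge of $J$ witnessing their adjacency has both ends outside $C$, so it survives; hence $J[D]$ realises $K_{t-|C|}$ as a depth-$\ell$ minor on $|D|\le\tfrac{2}{3}N'$ vertices. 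Summing the geometric series of the separator sizes, unless $h$ is $O(|V(G)|^{1-\varepsilon})$ (with the implied constant depending on $c,\varepsilon$) one would, after finitely many rounds, realise $K_{h/2}$ on fewer than $h/2$ vertices — impossible, since the branch sets are disjoint and nonempty. Thus the largest clique that is a depth-$\ell$ minor of $G$ has order $O(|V(G)|^{1-\varepsilon})$, uniformly over $\ell$: a bound polynomial in $|V(G)|$, but not yet in $\ell$.

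\medskip
\noindent\emph{Second phase (bootstrapping) and the main obstacle.}
The remaining task, which I expect to be the heart of the proof, is to remove the dependence on $|V(G)|$. Two ingredients feed a bootstrapping argument. First, iterating balanced separators down to pieces of order $r$ deletes in total only $O(|V(G)|\,r^{-\varepsilon})$ vertices, so for every $r$ there is a set $X$ with $|X|=O(|V(G)|\,r^{-\varepsilon})$ such that every component of $G-X$ has order at most $r$. Second, any depth-$\ell$ minor model can be trimmed so that each branch set occupies only $O(d\ell)$ of its vertices, where $d$ is the degree of that branch set in the minor. Feeding the current density bound back in and combining it with the $(X,\text{pieces of order}\le r)$ decomposition and the trimming — choosing $r$ so that the gain from the improved density estimate beats the $\Theta(\ell)$ inflation caused by the trimming, and solving for the number of edges of a densest depth-$\ell$ minor — produces a bound of the same shape but with a strictly smaller exponent of $|V(G)|$, at the cost of an extra factor polynomial in $\ell$. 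Iterating this a bounded number of times, roughly $1/\varepsilon$, drives that exponent to $0$, leaving $\nabla_\ell(G)$ bounded by a polynomial in $\ell$ whose degree depends on $1/\varepsilon$: polynomial expansion. The delicate points are arranging that each round of the bootstrap genuinely lowers the exponent despite the trimming inflation, and controlling how the polynomial degree grows with the number of rounds.
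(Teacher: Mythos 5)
This theorem is quoted from Dvořák and Norin~\cite{dvorak2016strongly}; the present paper gives no proof of it, so there is nothing internal to compare against and I can only assess your argument on its own terms.

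Your forward direction is the standard argument and is essentially correct, with one imprecision to repair: the Plotkin--Rao--Smith theorem~\cite{plotkin} returns either a separator of the stated size or a $K_h$-minor of depth $O(\ell\log n)$, not depth $\ell$. You must therefore take $h>2p(C\ell\log n)+1$ rather than $h>2p(\ell)+1$; this only inflates the second term by polylogarithmic factors and the choice $\ell=\lceil n^{1/(2k+2)}\rceil$ still yields a strongly sublinear separator. The first phase of your converse is also sound: iterating balanced separators of the subgraph realizing the model, and noting that all surviving branch sets lie in a single component, correctly gives that any clique appearing as a depth-$\ell$ minor has order $O(n^{1-\varepsilon})$, uniformly in $\ell$.

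The gap is your second phase, which is exactly where the content of the theorem lies, and what you have written there is a plan rather than a proof. Two concrete problems. First, the ``standard fact'' that polynomial expansion is equivalent to polynomially bounded orders of shallow clique minors is not a free reduction: the known route passes through shallow \emph{topological} minors (dense graphs contain large shallow topological cliques, and $\nabla_r$ is polynomially bounded in terms of the topological analogue at depth $O(r)$), and this chain of equivalences is itself a principal component of the Dvořák--Norin paper; invoking it hides a large part of the work. Second, the bootstrap as described does not close. Trimming a depth-$\ell$ model of $H$ confines it to a subgraph $J$ with $O(\ell|E(H)|+|V(H)|)$ vertices, and feeding a current density bound $D(N)\le g(\ell)N^{\beta}$ back through $J$ yields $|E(H)|/|V(H)|\le g(\ell)\bigl(C\ell|E(H)|\bigr)^{\beta}$, which solves to a bound of order $|V(H)|^{\beta/(1-\beta)}$ times a polynomial in $\ell$. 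Since $\beta/(1-\beta)>\beta$ for every $\beta\in(0,1)$, this feedback loop \emph{increases} the exponent rather than decreasing it; the decrease must come entirely from the interaction with the $(X,\text{pieces of size }\le r)$ decomposition --- e.g.\ an accounting of how many branch sets and connecting paths can meet $X$ versus how many are confined to single pieces --- and you have not specified that accounting. You flag these as ``the delicate points'' yourself; they are not details but the theorem, so the hard direction remains unproved.
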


In this note, we will focus on the weighted version of the separators.  There are two senses in which one could interpret
this statement: The weights could affect the balance or the size of the separators.  While we primarily focus on the latter sense,
we will need to use the former sense in the course of our argument as well. To avoid confusion, we say that an assignment of non-negative
real numbers to vertices is a \emph{weight assignment} if the values are used to control the balance of the separators and
a \emph{cost assignment} if they are used to control the size of the separators.  Given a weight or cost assignment
$f:V(G)\to\mathbb{R}_0^+$ for a graph $G$ and a set $X\subseteq V(G)$, we define $f(X)=\sum_{x\in X} f(x)$,
and for a subgraph $H\subseteq G$, we define $f(H)=f(V(H))$.

Consider a cost assignment $\rho:V(G)\to\mathbb{R}_0^+$ for a graph $G$.  Even if $G$ belongs to a class with strongly sublinear separators,
it may not necessarily have a balanced separator of small cost.  For example, suppose $G$ is a star with center $v$ and leaves $v_1$, \ldots, $v_n$,
and let $\rho(v)=n/3$ and $\rho(v_i)=1$ for $i=1,\ldots,n$.  Any balanced separator $C$ must contain either $v$ or at least $n/3$ leaves,
implying $\rho(C)\ge n/3\ge \rho(G)/4$.  However, as in this example, it may be the case that a cheap separator exists after deleting a bounded
number of expensive vertices.  More precisely, for integers $t\ge 1$ and $q\ge 0$, we say a set $C\subseteq V(G)$ is
\emph{$(\rho/t)$-cheap with $q$ outliers} if there exists a set $C'\subseteq C$ of size at most $q$ such that
$\rho(C\setminus C')\le \rho(G)/t$.  In~\cite{onsubsep}, I conjectured that in graphs from classes with polynomial expansion,
it is possible to get a cheap balanced separator with only a bounded number of outliers.
\begin{conjecture}\label{conj-main}
For every polynomial $p$, there exists a function $q:\mathbb{N}\to\mathbb{N}$ such that the following holds.
Let $G$ be a graph with expansion bounded by $p$ and let $\rho:V(G)\to\mathbb{R}_0^+$ be a cost assignment for $G$.
For every integer $t\ge 1$, $G$ has a balanced separator which is $(\rho/t)$-cheap with $q(t)$ outliers.
\end{conjecture}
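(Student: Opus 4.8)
The plan is to first strip off a small set of unavoidably expensive vertices, reducing Conjecture~\ref{conj-main} to a statement about cost assignments that are already spread out, and then to attack that statement. Fix $t$ and a parameter $q=q(t)$ to be chosen later; we may assume $|V(G)|\ge q$, since otherwise $C=V(G)$ is a balanced separator with at most $q$ outliers. Let $Z$ consist of the $q$ vertices of largest $\rho$-value, and define $\rho'$ by $\rho'(v)=\rho(v)$ for $v\notin Z$ and $\rho'(v)=0$ for $v\in Z$. The smallest value in $Z$ is at most $\rho(G)/q$ (as the $q$ largest values of $\rho$ sum to at most $\rho(G)$), so $\rho'(v)\le\rho(G)/q$ for all $v$, while $\rho'(G)\le\rho(G)$. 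If we can find a balanced separator $C$ with $\rho'(C)\le\rho(G)/t$, then with $C'=C\cap Z$ we get $|C'|\le q$ and $\rho(C\setminus C')=\rho'(C)\le\rho(G)/t$, so $C$ is $(\rho/t)$-cheap with $q$ outliers. It therefore suffices to prove the following: \emph{for every polynomial $p$ there is a function $g\colon\mathbb N\to\mathbb R^+$ with $g(q)\to 0$ as $q\to\infty$ such that every graph $G$ with expansion bounded by $p$ and every cost assignment $\mu$ satisfying $\mu(v)\le M/q$ for all $v$, for some $q\ge 1$ and some $M\ge\mu(G)$, admits a balanced separator $C$ with $\mu(C)\le g(q)\,M$.} Conjecture~\ref{conj-main} then follows with $q(t)$ the least $q$ for which $g(q)\le 1/t$ --- a value depending only on $t$ (through $p$), not on $|V(G)|$, which is precisely the point of the reduction.

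For the reduced statement the first thing to try is a single well-chosen separator. The class of graphs with expansion bounded by $p$ is hereditary and, by the Dvořák--Norin theorem, has strongly sublinear separators; combined with the standard reduction to vertex-weighted balance this gives, for any finite list of weight assignments, a single separator that is $\tfrac23$-balanced for all of them. In particular $G$ has a set $S$ of size at most $cn^{1-\varepsilon}$, where $n=|V(G)|$, that is $\tfrac23$-balanced for both the vertex count and $\mu$: take a vertex-balanced separator $S_1$, then inside the unique component of $G-S_1$ carrying more than $\tfrac23$ of the $\mu$-cost (if any) take a $\mu$-balanced separator $S_2$, and set $S=S_1\cup S_2$; every component of $G-S$ then has at most $\tfrac23 n$ vertices and at most $\tfrac23\mu(G)$ cost. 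Such an $S$ is already a balanced separator of $G$, but the only bound at hand for its cost is $\mu(S)\le cn^{1-\varepsilon}\cdot(M/q)$. Refining this is the content of the present note: feeding the problem on the bounded number of largest components of $G-S$ (whose vertex counts have dropped) back into the same statement, and moving the few vertices of $S$ whose cost exceeds a $t$-dependent threshold into the outlier set, one trades the factor $n^{1-\varepsilon}$ for, after $k$ rounds, the $k$-th iterated logarithm of $n$.

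The obstacle to a constant bound is that this route cannot eliminate the dependence on $n$: a bootstrap that enlarges the feasible range of $n$ by a bounded factor per round cannot cover all $n$ in boundedly many rounds, and the top-level separator of $G$ has no smaller instance to recurse on, so it must be made cheap directly. What is needed is a genuinely cost-aware separator theorem --- a balanced separator of $\mu$-cost $o(n^{1-\varepsilon})\cdot M/q$ --- rather than a size bound pushed through the per-vertex cap $\mu(v)\le M/q$. The natural way to seek one mirrors the proof of the Dvořák--Norin theorem: assuming every balanced separator of $G$ has $\mu$-cost greater than $g(q)\,M$, grow pairwise vertex-disjoint connected sets of bounded radius which, after contraction, form a depth-$\ell$ minor of density exceeding $p(\ell)$, contradicting the expansion bound; here the hypothesis $\mu(v)\le M/q$ should force the grown sets to be large, hence numerous relative to the total cost they consume. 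Arranging the growth so that the number of branch sets and of edges between them beats $p(\ell)$ with a loss depending only on $q$ (equivalently $t$) and not on $n$ is the heart of the matter, and is the step I expect to be the main obstacle --- indeed it is why the statement is posed as a conjecture.
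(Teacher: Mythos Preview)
The statement is a conjecture that the paper does not prove; you correctly say so in your final sentence, so there is no proof in the paper to compare against. Your first-paragraph reduction --- stripping off the $q$ most expensive vertices and capping the remaining costs at $\rho(G)/q$ --- is correct and is the natural normalization. The obstacle you identify in your third paragraph is genuine: producing a balanced separator of cost $o_{q\to\infty}(1)\cdot M$ under the pointwise cap $\mu(v)\le M/q$, with no residual dependence on $n$, is precisely the open problem, and the cost-aware shallow-minor growth you sketch is a plausible line of attack that has not been carried through.

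Your second paragraph, however, misdescribes how the paper obtains the iterated-logarithm bound of Theorem~\ref{thm-iterlog}. The paper does \emph{not} recurse on the components of $G-S$ after a balanced separation; it is unclear how that would collapse the dependence on $n$, since a separator of a component is not a separator of $G$, and taking unions over a recursion tree only accumulates cost. Instead, the key lemma (Theorem~\ref{thm-main}) runs a Plotkin--Rao--Smith style iteration in which one either peels off a non-expanding set, or grows a tight shallow clique minor, or --- the new ingredient, step~(c) of the proof --- handles the at most $5bt$ vertices of cost exceeding $\rho(G)/(5bt)$ by passing to the weak-reachability graph $G^{[\prec,m]}$ restricted to their $m$-neighbourhoods, an auxiliary graph on only $n'=5bt\,\wcol^\prec_m(G)$ vertices, polylogarithmic in $n$, and invoking the outlier-separator problem on \emph{that} small instance. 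One application of Theorem~\ref{thm-main} thus bounds $\eta(G,t)$ by $r\cdot\eta_{\le n'}(G^{[\prec,m]},5rt)$ with $n'$ polylogarithmic in $n$; iterating $a$ times (Corollary~\ref{cor-iter}) yields the $\log^{(a)} n$ dependence. This size collapse to a polylog-sized auxiliary instance, not recursion on components, is what drives the improvement.
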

Let me remark that Conjecture~\ref{conj-main} is a weakening of my conjecture that graphs with polynomial expansion
are fractionally treewidth-fragile~\cite{twd}, and consequently the conclusion of Conjecture~\ref{conj-main} holds for
graphs from every class known to be fractionally treewidth-fragile.  This includes all proper minor-closed classes~\cite{devospart}
and all graphs with polynomial expansion and bounded maximum degree~\cite{twd}.  Moreover, in~\cite{twd}, I proved the following
weakening of this conjecture.

\begin{theorem}[Dvořák~\cite{onsubsep}]\label{thm-log}
For every polynomial $p:\mathbb{N}\to\mathbb{N}$, there exists a polynomial $q:\mathbb{N}\to\mathbb{N}$ such that the following holds.
Let $G$ be an $n$-vertex graph with expansion bounded by $p$ and let $\rho:V(G)\to\mathbb{R}_0^+$ be a cost assignment for $G$.
For every integer $t\ge 1$, $G$ has a balanced separator which is $(\rho/t)$-cheap with $q(t\log n)$ outliers.
\end{theorem}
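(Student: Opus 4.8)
The strategy is to reduce to a cost function with only a few distinct orders of magnitude and then to handle those scales essentially one at a time, spending $\pll(t)$ outliers on each. After a harmless normalization there are only $O(\log(tn))$ relevant scales, so the total number of outliers is $\pll(t)\cdot\pll(tn)$, which is at most $q(t\log n)$ for a suitable polynomial $q$. The tool is the forward direction of the Dvořák--Norin theorem: there are constants $c\ge 1$ and $\varepsilon\in(0,1)$, depending only on $p$, such that every graph $H$ with expansion bounded by $p$ and every weight assignment $w$ on $H$ admit a $w$-balanced separator of size at most $c|V(H)|^{1-\varepsilon}$; its proof carries arbitrary weights through unchanged, and recursive halving then gives, for each fixed $\delta>0$, a set of size $O_\delta(|V(H)|^{1-\varepsilon})$ whose deletion leaves every component with at most a $\delta$-fraction of the total $w$-weight. (Equivalently one may use that $\wcol_r$ is bounded by $\pll(r)$ for such graphs, but the separator formulation suffices.)

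First I would normalize so that $\rho(G)=1$ and discard the \emph{dust}, the vertices of cost below $1/(4tn)$: their total cost is below $1/(4t)$, so they may all be put into the separator for free, and we may assume every cost lies in $[1/(4tn),1]$. Partition the vertices into buckets $B_0,\dots,B_J$ by the power of two containing their cost, so $J=O(\log(tn))$ and $|B_j|\le 2^{j+1}$ for every $j$. Two ranges of buckets are handled immediately. If $|B_j|$ is below a fixed polynomial $q_0(t)$, declare $C\cap B_j$ to be outliers, at most $q_0(t)$ of them per such bucket. If instead $2^{-j}$ is below a suitable $1/(t\cdot\pll(tn)\cdot cn^{1-\varepsilon})$ threshold, then a balanced separator of $G$, having at most $cn^{1-\varepsilon}$ vertices, absorbs at most $cn^{1-\varepsilon}2^{-j}$ cost from $B_j$, and the sum of these over all such buckets stays below $1/(4t)$; these buckets cost nothing.

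The work is in the \emph{intermediate} buckets --- those with $|B_j|$ between $q_0(t)$ and roughly $tcn^{1-\varepsilon}$ --- of which there are $O(\log(tn))$. For such a $B_j$ one needs a balanced separator $C$ of $G$ meeting $B_j$ in at most $\pll(t)$ vertices beyond an $O(1/(tJ))$-fraction of $B_j$, so that, after moving those $\pll(t)$ vertices to the outliers, $C\cap B_j$ contributes at most an $O(1/J)$-fraction of $\rho(G)/t$ to the surviving cost. A single weighted balanced separator is not enough: it controls how much of $B_j$ survives \emph{in the components} of $G-C$, not how much is \emph{taken into} $C$. The plan is therefore to incorporate $B_j$ by a recursion that repeatedly applies the weighted separator theorem --- to the counting weight combined with $\mathbb{1}_{B_j}$, so as to preserve balance --- halving a suitable potential at each of $O(\log n)$ rounds, sacrificing only $\pll(t)$ vertices of $B_j$ to the outliers in total, and reassembling the pieces into a single balanced separator of $G$. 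Running this for every intermediate bucket and adding the dust and the cheap buckets keeps the surviving cost below $\rho(G)/t$ while using $\pll(t)\cdot\pll(tn)$ outliers, hence at most $q(t\log n)$.

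The crux --- and the place where the single logarithm is forced --- is the intermediate step: producing a balanced separator that avoids all but a $\pll(t)$-controlled part of each intermediate bucket, together with the outlier bookkeeping needed so that the $n^{1-\varepsilon}$ factor attached to any individual separator is completely absorbed, leaving only the logarithmic factors coming from the number of recursion rounds and the number of cost-scales. It is precisely this step that the present note refines, reorganizing the cost-scales and the recursion so that the single logarithm is replaced by any fixed iterate of the logarithm.
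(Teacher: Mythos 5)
The scale decomposition is a sensible normalization, and your treatment of the dust, of the sparse buckets ($|B_j|\le q_0(t)$), and of the very-low-cost buckets is correct. But the ``intermediate bucket'' step, which you yourself identify as the crux, is not carried out --- it is a restatement of the theorem for the special case of a cost function that is (up to a factor of $2$) a constant multiple of the indicator of a set $B_j$, and nothing in the proposal establishes it. The obstacle you correctly diagnose --- that a weighted balanced separator controls how much of $B_j$ survives \emph{in the components} of $G-C$, not how much lands \emph{in} $C$ --- is not overcome by the proposed fix: applying the Dvořák--Norin separator theorem to ``the counting weight combined with the indicator of $B_j$'' again only constrains the components, and does not even guarantee balance with respect to the counting measure alone. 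The ``recursion halving a suitable potential over $O(\log n)$ rounds while sacrificing only $\mathrm{polylog}(t)$ vertices of $B_j$ in total'' is left unspecified (which potential, why it halves, why the sacrificed vertices do not accumulate across rounds), and there is a further unaddressed combination problem: each intermediate bucket is processed by its own recursion yielding its own separator, and these can neither be united (the cost blows up) nor kept separate (a single separator must be balanced and simultaneously cheap for all $O(\log(tn))$ buckets).

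For comparison, the cited proof (whose refinement is Theorem~\ref{thm-main} of this note) does not decompose by scales at all. It runs the Plotkin--Rao--Smith dichotomy with costs: either some $X$ with $w(X)\le w(R)/2$ has cheap cost-boundary $\rho(N(X))<\rho(X)/(5t)$, in which case $X$ is set aside, or the remainder is a $(w,\rho,5t)$-expander, in which case Lemma~\ref{lemma-dist} yields paths of length $O(t\log(tn))$ connecting a new bag to all bags of a growing shallow clique minor, whose size bounded expansion caps at $b=\omega_l^2(G)\,l=\mathrm{poly}(t\log n)$ vertices. The $q(t\log n)$ outliers are precisely the at most $5bt$ vertices of cost exceeding $\rho(G)/(5bt)$, which must be excluded up front so that the clique-minor paths are cheap; the present note's improvement consists exactly in recursing on the small graph these expensive vertices induce instead of discarding them all. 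To complete your route you would have to prove the intermediate-bucket claim, and that claim is essentially Theorem~\ref{thm-log} itself.
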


As our main result, we make a further step towards Conjecture~\ref{conj-main} by improving the dependence on the number of vertices.
Let us define $\log^{(0)}(x)=x$ and $\log^{(i+1)} x=\log_2 \max\bigl(2,\log^{(i)}(x)\bigr)$ for $i\ge 0$.

\begin{theorem}\label{thm-iterlog}
For every polynomial $p:\mathbb{N}\to\mathbb{N}$ and integer $a\ge 0$, there exists a polynomial $q:\mathbb{N}\to\mathbb{N}$ such that the following holds.
Let $G$ be an $n$-vertex graph with expansion bounded by $p$ and let $\rho:V(G)\to\mathbb{R}_0^+$ be a cost assignment for $G$.
For every integer $t\ge 1$, $G$ has a balanced separator which is $(\rho/t)$-cheap with $\exp(q(t))\log^{(a)} n$ outliers.
\end{theorem}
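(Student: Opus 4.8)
The plan is to prove, by induction on $a$, a more flexible \emph{weighted} statement: for every polynomial $p$ and every $a\ge 0$ there is a polynomial $q$ such that every $n$-vertex graph $G$ with expansion bounded by $p$, equipped with a cost assignment $\rho$ and \emph{and} a weight assignment $w$, has a $w$-balanced separator that is $(\rho/t)$-cheap with $\exp(q(t))\log^{(a)}n$ outliers; the theorem is the special case $w\equiv 1$. Carrying the auxiliary weight along is essentially forced, since the recursive step produces instances in which the relevant balance is not the uniform one. The base case $a=0$ is trivial: take $C=V(G)$ with every vertex an outlier, which is vacuously balanced and has cheap part of cost $0$, while $\exp(q(t))\log^{(0)}n=\exp(q(t))\,n\ge n$. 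Everything therefore rests on the inductive step $a\to a+1$.

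Before the induction I would record two routine reductions and one tool. First, zeroing out every cost below $\rho(G)/(2tn)$ loses total cost $<\rho(G)/(2t)$, and rounding the remaining costs down to powers of two loses a further factor at most $2$; absorbing these into $t$, we may assume the nonzero costs are $1,2,\dots,2^{L}$ with $L=O(\log(tn))$, the number of \emph{cost scales}. Second, there are fewer than $4t$ vertices $v$ with $\rho(v)>\rho(G)/(4t)$; placing all of them into the separator as outliers (which the budget $\exp(q(t))\ge 4t$ allows) and deleting them, we may assume every cost is at most $\rho(G)/(4t)$ and that it suffices to separate the resulting graph. The tool is the \emph{weighted separator lemma}: by the Dvořák--Norin theorem together with a standard argument, polynomial expansion gives $c,\varepsilon>0$ depending only on $p$ such that every subgraph $H\subseteq G$ and every weight function $\omega$ on $H$ admit a separator $S$ with $|S|\le c|V(H)|^{1-\varepsilon}$ and all components of $H-S$ of $\omega$-weight at most $\tfrac23\omega(H)$; iterating this boundedly many times pushes the components below any fixed fraction of $\omega(H)$ and lets one balance simultaneously with respect to vertex count and cost, at the expense of the constants.

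For the inductive step I would first use the weighted separator lemma to extract a small separator $S$, $|S|\le cn^{1-\varepsilon}$, balanced with respect to both vertex count and cost; its light part (vertices of cost at most $\rho(G)/(2t|S|)$) has total cost at most $\rho(G)/(2t)$, so the only problem is that $S$ may contain too many expensive vertices. To reduce their number from $O(n^{1-\varepsilon})$ to $\log^{(a+1)}n$, I would pass to a quotient: cluster $V(G)$ into connected pieces of bounded radius, contract them to obtain $G^{*}$ (still of polynomial expansion, with a polynomial depending only on $p$ and the radius), and transport $\rho$ and $w$ to $G^{*}$ by summing over pieces, so that a cheap $w^{*}$-balanced separator of $G^{*}$ lifts, by taking the union of the corresponding pieces, to a $w$-balanced separator of $G$ of the same cost. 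Applying the inductive hypothesis to $G^{*}$ then returns a separator with few outlier pieces; the remaining work, where the difficulty concentrates, is to replace ``put the whole expensive piece into the separator'' by ``put a cheap balanced separator of the piece into the separator'', which calls for a secondary recursion on vertex count inside each expensive piece. Solving the resulting two-parameter recurrence — each pass to a quotient collapses $n$ toward (a power of) $\log n$, and the secondary recursion contributes one more iterated logarithm — yields the $\log^{(a+1)}n$ bound, with all of the $t$-dependence collected into a single $\exp(q(t))$ factor.

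The main obstacle is precisely the interface between contraction and cost. Keeping $G^{*}$ within polynomial expansion \emph{with an $n$-independent polynomial} forces the pieces to have bounded radius, and such pieces carry no a priori control on their size or cost; hence the outlier pieces returned by the recursion on $G^{*}$ can be both numerous and individually very expensive. Making the secondary recursion genuinely pay off — guaranteeing that an expensive bounded-radius piece itself has a cheap balanced separator with few outliers, and that the bookkeeping across the two recursions keeps the outlier count at $\exp(\mathrm{poly}(t))\log^{(a+1)}n$ rather than, say, $\mathrm{poly}(t)^{\log\log n}$ — is where the argument must be most delicate, and it is presumably also the reason the bound still falls short of the constant $q(t)$ predicted by Conjecture~\ref{conj-main}.
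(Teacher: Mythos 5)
There is a genuine gap, and you have in fact located it yourself: the entire argument hinges on a mechanism that takes an $n$-vertex instance to an instance of size $\mathrm{poly}(t)\cdot\mathrm{polylog}(n)$ in a single step, and your proposed mechanism (contracting connected pieces of bounded radius and recursing on the quotient) does not provide one. Contracting bounded-radius pieces gives no reason for the quotient to have anywhere near $\mathrm{polylog}(n)$ vertices, and if the pieces are made large enough to shrink the vertex count substantially, they no longer have bounded radius and the quotient's expansion is no longer controlled by an $n$-independent polynomial; moreover, as you note, the pieces carry no control on cost, so the ``secondary recursion'' inside expensive pieces has nothing to grip. The sentence ``each pass to a quotient collapses $n$ toward (a power of) $\log n$'' is precisely the claim that needs a proof, and none is offered. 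Your preliminary reductions also do not help with the real difficulty: after discarding the $O(t)$ vertices of cost above $\rho(G)/(4t)$, there can still be $\mathrm{poly}(t)\log n$ vertices of cost just below the relevant threshold, far too many to declare outliers.

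The paper's mechanism is entirely different and is worth contrasting. It runs the Plotkin--Rao--Smith iteration: either the current residual graph $R_i$ fails to be a $(w,\rho,5t)$-expander, in which case a low-weight piece with a cheap boundary is cut off, or it is an expander, in which case Lemma~\ref{lemma-dist} forces any two vertex sets of non-negligible cost to lie within distance $m=O(t\log t)$ of each other. Consequently the at most $5bt$ ``expensive'' vertices (those of cost above $\rho(G)/(5bt)$, where $b=\mathrm{poly}(t)\cdot\mathrm{polylog}(n)$ comes from the clique-minor bound) are pairwise close, and their weak-reachability closure in the auxiliary graph $G^{[\prec,m]}$ is a graph $G'_i$ on only $n'=5bt\,\wcol^\prec_m(G)=\mathrm{poly}(t)\cdot\mathrm{polylog}(n)$ vertices, still of polynomial expansion by Lemma~\ref{lemma-exp}. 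A recursive weighted separator in $G'_i$ (via Lemma~\ref{lemma-sep}) then splits the expensive vertices into groups whose maximum cost drops geometrically, so only $r=O(\log t)$ such rounds are needed; iterating the whole construction $a$ times through $G^{[\prec,m]}$ yields $\log^{(a)}n$. This expander dichotomy, together with the weak-coloring-number auxiliary graph that converts ``pairwise within distance $m$'' into ``a polylogarithmic-size induced subgraph of a graph with polynomial expansion,'' is the missing idea; without a substitute for it, the induction on $a$ cannot be closed.
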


Note that unlike Theorem~\ref{thm-log}, the bound in Theorem~\ref{thm-iterlog} is exponential in $t$, and thus
it beats Theorem~\ref{thm-log} only for $t=o(\log\log n)$.  Our approach gives a better bound in terms of weak coloring numbers.
Let $G$ be a graph and let $\prec$ be a linear ordering of its vertices.  For a vertex $v\in V(G)$ and an integer $r\ge 0$,
a vertex $u$ is \emph{$(\prec,r)$-reachable} from $v$ if there exists a path $P$ from $v$ to $u$ in $G$ of length at most $r$
such that $u\preceq x$ for every $x\in V(P)$ (and in particular, $u\preceq v$).  Let $L^\prec_r(v)$ denote the set of vertices
$(\prec,r)$-reachable from $v$, and let us define $\wcol^\prec_r(G)=\max\{|L^\prec_r(v)|:v\in V(G)\}$.
The \emph{weak $r$-coloring number} $\wcol_r(G)$ is the minimum of $\wcol^\prec_r(G)$ over all linear orderings $\prec$ of $V(G)$.
\begin{theorem}\label{thm-iterweak}
For every polynomial $p:\mathbb{N}\to\mathbb{N}$ and integer $a\ge 0$, there exists a polynomial $q:\mathbb{N}\to\mathbb{N}$ such that the following holds.
Let $G$ be an $n$-vertex graph with expansion bounded by $p$ and let $\rho:V(G)\to\mathbb{R}_0^+$ be a cost assignment for $G$.
For every integer $t\ge 1$, $G$ has a balanced separator which is $(\rho/t)$-cheap with $q(t)\wcol^5_{q(t)}(G)\log^{(a)} n$ outliers.
\end{theorem}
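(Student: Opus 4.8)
The plan is to prove Theorem~\ref{thm-iterweak} by induction on $a$, with Theorem~\ref{thm-log} (or a comparable base estimate) handling $a=0$, and to bootstrap the dependence on $n$ from $\log^{(a)}n$ down to $\log^{(a+1)}n$ in the inductive step. The intuition is: Theorem~\ref{thm-log} gives us a balanced separator with $O(q_0(t\log n))$ outliers, which is polynomial in $\log n$; we want to replace each application of this bound by one that pays only $\log^{(a+1)}n$ but at the cost of a multiplicative $\wcol$ factor and a worse (exponential) dependence on $t$. The mechanism for the improvement should be a recursive/amortized argument: we do not ask for one global cheap separator directly, but rather build a separator tree where at each level we split the graph and the cost is distributed, so that the number of levels is $\log^{(a)}n$ (by induction) and the per-level outlier cost involves $\wcol_{q(t)}(G)$.

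First I would set up the reduction to weighted balanced separators (the ``weight'' sense, controlling balance rather than size): given the cost assignment $\rho$, the standard trick is to also track a weight assignment $w$ that is refined as we recurse, using the fact that a class with polynomial expansion is closed under taking subgraphs and that $\wcol_r$ is monotone under subgraphs. The key structural tool is the ordering $\prec$ witnessing $\wcol_r(G)$: for a threshold parameter $\theta$ (to be chosen as a function of $t$), let $Z$ be the set of vertices $v$ with $\rho(L^\prec_r(v))$ large — i.e. the ``expensive reach'' vertices. One shows $|Z|$ is controlled because the total cost $\rho(G)$ is fixed and each vertex lies in at most $\wcol_r(G)$ of the sets $L^\prec_r(\cdot)$ (a double-counting argument: $\sum_v \rho(L^\prec_r(v)) \le \wcol_r(G)\,\rho(G)$). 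After deleting $Z$ — these become outliers — the remaining graph has the property that every vertex has cheap $r$-reach, which is exactly the hypothesis needed to run a ``bounded expansion'' recursion (as in the proofs of Theorem~\ref{thm-log} and the treewidth-fragility results) but now with the cost being genuinely small along the recursion.

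Next, with $Z$ removed, I would apply the recursive separator construction: pick a ball of appropriate radius $r=r(t)$ around a vertex, its boundary is a separator whose cost is controlled by the cheap-reach property, recurse on the two sides with the cost threshold scaled so that after $\log^{(a)}(n)$-many levels — this is where the induction on $a$ enters, applied to each piece which has roughly $\log n$ vertices relative to the parent — the accumulated separator is $(\rho/t)$-cheap. The outliers accumulate additively over the levels, giving the $q(t)\,\wcol^5_{q(t)}(G)\,\log^{(a)}n$ bound; the power $5$ presumably arises from combining a $\wcol$ factor from the $Z$-deletion, a factor from bounding the recursion depth in terms of $\wcol$ and density, and factors from the Dvořák–Norin machinery relating separators to $\nabla_\ell$ and to $\wcol$. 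Theorem~\ref{thm-iterlog} then follows from Theorem~\ref{thm-iterweak} using the known polynomial bound $\wcol_r(G)\le (\text{poly in }r)$ for graphs of polynomial expansion, absorbing $\wcol^5_{q(t)}(G)$ into $\exp(q'(t))$.

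The main obstacle I expect is controlling the interaction between the recursion depth and the cost accounting: one must choose the radius $r(t)$ and the cost-threshold schedule so that the ball boundaries are simultaneously (i) sparse enough that their total cost telescopes to at most $\rho(G)/t$, and (ii) the recursion terminates after only $\log^{(a)}n$ levels while each recursive subproblem legitimately has the ``one fewer logarithm'' structure needed to invoke the inductive hypothesis. Making the induction on $a$ actually drop a logarithm — rather than merely a constant factor — requires that the recursive pieces be genuinely exponentially smaller in the relevant parameter at each stage, which will force $r(t)$ and the number of outliers to grow like $\exp(\text{poly}(t))$; keeping the $\wcol$ exponent down to $5$ (as opposed to something larger) will require care in how the double-counting bound on $|Z|$ is iterated across levels without recompounding.
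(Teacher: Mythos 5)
Your proposal does not follow the paper's route, and the central mechanism that makes the iterated logarithm appear is missing from it. The paper does not induct on $a$ via a divide-and-conquer on the two sides of a separator with $\log^{(a)}n$ levels of recursion; the sides of a balanced separator still have $\Theta(n)$ vertices, so recursing on them cannot by itself drop a logarithm. Instead, the proof runs a single Plotkin--Rao--Smith-style iteration on $G$ (ball growing in a $(w,\rho,5t)$-expander, with a growing tight clique minor forcing termination), and the only place outliers arise is in handling the \emph{expensive} vertices, namely those with $\rho(v)>\rho(G)/(5bt)$. There are at most $5bt=\mathrm{poly}(t\log n)$ of them, and to keep them out of the cheap part of the separator one must separate them pairwise to distance greater than $m$ in the residual graph; by Lemma~\ref{lemma-sep} this is exactly a $w$-balanced separator problem in the auxiliary graph $G^{[\prec,m]/V(H_i)}$, which has at most $n'=5bt\wcol^\prec_m(G)=\mathrm{poly}(t,\log n,\wcol)$ vertices and still has polynomial expansion by Lemma~\ref{lemma-exp}. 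That is the ``one fewer logarithm'' step: $\eta(G,t)\le r\,\eta_{\le n'}\bigl(G^{[\prec,m]},5rt\bigr)$ (Theorem~\ref{thm-main}), iterated $a$ times (a \emph{constant} recursion depth) and closed off with the trivial bound $\eta(H,t)\le|V(H)|$ once the graphs have $\mathrm{poly}\bigl(t\log^{(a)}n\bigr)$ vertices. Your sketch never isolates the few individually expensive vertices as the object to recurse on, so the inductive step as you describe it has no mechanism forcing the subproblems down to size $\mathrm{poly}(\log n)$; your plan to have recursion depth $\log^{(a)}n$ while also invoking the inductive hypothesis on each piece conflates the two sources of savings.

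A secondary but genuine error: the double-counting claim $\sum_v\rho\bigl(L^\prec_r(v)\bigr)\le\wcol_r(G)\,\rho(G)$ is false as stated. The weak colouring number bounds $|L^\prec_r(v)|$, i.e.\ how many vertices $v$ reaches, not how many vertices reach a fixed $u$; the latter quantity $|\{v:u\in L^\prec_r(v)\}|$ can be as large as $n-1$ (take a star with its centre first in $\prec$), while $\wcol_1$ of that ordered star is $2$. The correct dual statement, which the paper uses in Lemma~\ref{lemma-exp}, is that each $v$ lies in at most $\wcol^\prec_r(G)$ of the sets $R^\prec_r(u)=\{x:u\in L^\prec_r(x)\}$. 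Consequently the set $Z$ of ``expensive-reach'' vertices you propose to delete is not controlled by the argument you give, and this part of the plan would need to be rebuilt around per-vertex cost thresholds (as the paper does) rather than reach-cost thresholds.
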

Note that weak coloring numbers and the expansion of a graph are linked, as described in the following lemma
which follows from known bounds (we give a more detailed argument in the Appendix).
\begin{lemma}\label{lemma-wcolw}
For every graph $G$ and integer $r\ge 1$, we have $\wcol_r(G)\le \Bigl(2r\nabla_r(G)\Bigr)^{3r}$ and $\nabla_r(G)\le \wcol_{4r}(G)$.
\end{lemma}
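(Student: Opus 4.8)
The plan is to prove the two inequalities of Lemma~\ref{lemma-wcolw} separately; the second is a short direct orientation argument, whereas the first reduces to the standard theory of generalized coloring numbers.

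For $\nabla_r(G)\le\wcol_{4r}(G)$, let $H$ be a depth-$r$ minor of $G$ witnessed by pairwise disjoint connected branch sets $\{B_v:v\in V(H)\}$, each of radius at most $r$ in $G$ and centered at some $c_v$, and for each $uv\in E(H)$ fix an edge $x_{uv}y_{uv}\in E(G)$ with $x_{uv}\in B_u$ and $y_{uv}\in B_v$ realizing it. Fix a linear order $\prec$ of $V(G)$ with $\wcol^\prec_{4r}(G)=\wcol_{4r}(G)$, let $b_v$ denote the $\prec$-least vertex of $B_v$ (these are pairwise distinct), and orient every $uv\in E(H)$ towards the endpoint whose branch set has the $\prec$-smaller least vertex. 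If $v\to u$ is an out-edge at $v$, so that $b_u\prec b_v$, then concatenating a path from $c_v$ to $x_{uv}$ inside $B_v$ (length at most $r$), the edge $x_{uv}y_{uv}$, and a path from $y_{uv}$ to $b_u$ inside $B_u$ (length at most $2r$, routed through $c_u$) yields a walk from $c_v$ to $b_u$ of length at most $3r+1\le 4r$ (using $r\ge 1$) whose vertices are all $\succeq b_u$ (those in $B_v$ are $\succeq b_v\succ b_u$, those in $B_u$ are $\succeq b_u$); this walk contains a $c_v$--$b_u$ path of length at most $4r$ all of whose vertices are $\succeq b_u$, so $b_u\in L^\prec_{4r}(c_v)$. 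Distinct out-neighbours of $v$ give distinct vertices $b_u$, so $v$ has out-degree at most $|L^\prec_{4r}(c_v)|\le\wcol_{4r}(G)$, hence $|E(H)|\le\wcol_{4r}(G)\,|V(H)|$; as $H$ was arbitrary, $\nabla_r(G)\le\wcol_{4r}(G)$.

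For $\wcol_r(G)\le\bigl(2r\nabla_r(G)\bigr)^{3r}$ I would pass through the $r$-admissibility $\adm_r(G)$ and combine two known facts: $\wcol_r(G)\le\adm_r(G)^{O(r)}$, where an ordering minimizing $\adm_r$ also witnesses $\wcol_r$ up to an exponential-in-$r$ loss (the inductive bound of Kierstead and Yang, each step multiplying the reachable set by at most $\adm_r(G)$); and $\adm_r(G)=O\bigl(r\,\nabla_r(G)\bigr)$, i.e.\ bounded density of depth-$r$ minors forces an ordering in which each vertex reaches only boundedly many smaller vertices by internally disjoint paths of length at most $r$ (due to Dvořák, with bounds of this flavour also collected in the standard references on sparsity and bounded expansion, building on work of Zhu). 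Composing these and folding the constants into the exponent — with the degenerate regime of small $\nabla_r(G)$, in particular $G$ a forest where $\wcol_r(G)\le r+1$, treated directly — yields the stated bound, and the Appendix would carry out this composition with explicit constants.

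I expect the orientation argument to be entirely routine; the substance of the lemma is the input $\adm_r(G)=O\bigl(r\,\nabla_r(G)\bigr)$ — that a density condition imposed on all depth-$r$ minors already produces a single ordering that is good simultaneously from every vertex — which is exactly the external ingredient the statement refers to. The remaining step, reading the base $2r\nabla_r(G)$ and exponent $3r$ off the composed inequalities, is bookkeeping made painless by the slack in $\wcol_r(G)\le\adm_r(G)^{O(r)}$.
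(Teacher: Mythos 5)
Your argument for the second inequality, $\nabla_r(G)\le\wcol_{4r}(G)$, is correct and complete. The paper does not prove this part at all (it only cites \cite[Observation~10]{espsublin}), so your orientation argument --- out-degree of each vertex $v$ of the minor bounded by $|L^\prec_{4r}(c_v)|$ via the $c_v$--$b_u$ walk of length at most $3r+1$ through the two branch sets --- is a fine self-contained substitute, and the accounting there checks out.

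The first inequality is where there is a genuine gap. You treat $\wcol_r(G)\le\adm_r(G)^{O(r)}$ as an off-the-shelf fact, justified by ``each step multiplying the reachable set by at most $\adm_r(G)$''. That justification does not work: the paths leading from a vertex $x$ of the reachability tree to the various vertices reached through $x$ are \emph{not} internally disjoint in $G$ (they can be made disjoint only in a layered auxiliary digraph), so admissibility does not directly bound the branching at $x$. The paper's Appendix is devoted precisely to this point: the easy, genuinely known detour (via strong coloring numbers) yields only $\wcol^\prec_r(G)\le\bigl(\adm^\prec_r(G)\bigr)^{r^2}$, which is far too weak to give $\bigl(2r\nabla_r(G)\bigr)^{3r}$, and Lemma~\ref{lemma-admw} establishes the exponent-$r$ version $\wcol^\prec_r(G)\le\bigl(r^2\adm^\prec_r(G)\bigr)^r$ by a nontrivial direct argument: one builds an outbranching $\vec{T}$ in the layered digraph, and bounds its out-degree by $r^2\adm^\prec_r(G)$ by passing to an independent set in an auxiliary intersection graph of maximum degree less than $r^2$ among the subtree paths. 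Your other input, $\adm_r(G)=O\bigl(r\nabla_r(G)\bigr)$, is also not what is known: the bound the paper invokes (Grohe et al., Theorem~\ref{thm-covcol}) is $\adm_r(G)\le 6r\nabla_r^3(G)$, cubic in $\nabla_r(G)$. As it happens, the cubic bound still composes with Lemma~\ref{lemma-admw} to give $\bigl(6r^3\nabla_r^3(G)\bigr)^r\le\bigl(2r\nabla_r(G)\bigr)^{3r}$, so the constant-chasing really is painless once the right two ingredients are in hand --- but the exponent-$r$ bound on $\wcol_r$ in terms of $\adm_r$ is the substance of the proof rather than bookkeeping, and your sketch does not supply it.
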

Hence, Theorem~\ref{thm-iterweak} implies Theorem~\ref{thm-iterlog}.
Let us remark that there exist graphs with polynomial expansion and superpolynomial weak coloring numbers~\cite{covcol}.
However, weak coloring numbers are known to be polynomial for a number of interesting graph classes~\cite{van2017generalised}.

Let us give two further interpretations of our main result.  For an integer $r\ge 1$, we say that $C\subseteq V(G)$ is
a \emph{balanced distance-$r$ separator} in a graph $G$ if $V(G-C)=A\cup B$ for disjoint sets $A$ and $B$ such that
$|A|,|B|\le \tfrac{2}{3}|V(G)|$ and the distance $d_G(A,B)$ between $A$ and $B$ in $G$ is greater than $r$.  Note that stars do not
have small balanced distance-$2$ separators.  However, this can again be worked around by deleting a few vertices first.
\begin{theorem}\label{thm-dist}
For every polynomial $p:\mathbb{N}\to\mathbb{N}$ and integers $a,r\ge 1$, there exists a polynomial $q':\mathbb{N}\to\mathbb{N}$ such that the following holds.
Let $G$ be an $n$-vertex graph with expansion bounded by $p$.  For every integer $t\ge 1$, there exists a set $Z\subseteq V(G)$ of size
at most $\exp(q'(t))\log^{(a)} n$ such that $G-Z$ has a balanced distance-$r$ separator of size at most $n/t$.
\end{theorem}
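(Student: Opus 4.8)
The plan is to derive Theorem~\ref{thm-dist} from Theorem~\ref{thm-iterweak} by an iterative construction. Since $r$ is fixed, it suffices to run $O(r)$ rounds, each using a single application of Theorem~\ref{thm-iterweak} and raising the guaranteed separation distance by $2$, provided each round enlarges the separator by at most $n/(O(r)t)$ and the exceptional set by at most $\exp(\mathrm{poly}(t))\log^{(a)}n$. A convenient way to phrase the target of one round is that, for any graph $H$ and any $\ell\ge 1$, a balanced distance-$\ell$ separator of $H$ is exactly a balanced separator of the auxiliary graph on $V(H)$ in which $u,v$ are adjacent precisely when $1\le d_H(u,v)\le\ell$; the two notions of balance coincide, and the absence of such an edge between $A$ and $B$ is the condition $d_H(A,B)>\ell$.

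The initial round applies Theorem~\ref{thm-iterweak} to $G$ with the unit cost assignment and balance parameter $\Theta(rt)$, moves the resulting outliers into $Z$, and groups the pieces of $G-Z$ into two parts $A,B$ of size at most $\tfrac23 n$, giving $d_{G-Z}(A,B)\ge 2$. In a general round we have $S=V(G)\setminus(A\cup B\cup Z)$ with, say, $d_{G-Z}(A,B)>2i-1$, and we want to erode one side: replacing $A$ by the set $A'$ of its vertices at distance more than $2$ from $S$ in $G-Z$, a short distance computation (any $A$-to-$B$ path in $G-Z$ meets $S$; the first meeting point is adjacent to $A$, hence at distance at least $2i-1$ from $B$; and a path from $A'$ reaches it only after three steps) shows $d_{G-Z}(A',B)>2i+1$. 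It would thus suffice to move the ``collar'' $A\setminus A'$ of vertices of $A$ within distance $2$ of $S$ into $S$; but the collar can be large, exactly when $S$ contains ``hub'' vertices with large short-radius balls, so we instead cut the collar away from the rest of $A$ cheaply by a further application of Theorem~\ref{thm-iterweak} inside $G[A]$ (which inherits expansion bounded by $p$), putting the hub-like vertices that must be discarded into $Z$, adding the small resulting cut together with the near side of the collar to $S$, and keeping the far side as the new $A$; then $i$ increases by $1$.

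After $O(r)$ rounds we reach $d_{G-Z}(A,B)>r$, the set $V(G)\setminus(A\cup B\cup Z)$ is a union of $O(1)$ separators of total size $n/t$ (after rescaling $t$ by a constant depending on $r$), and $|Z|$ is a sum of $O(1)$ terms each $\exp(\mathrm{poly}(t))\log^{(a)}n$, hence of the required form. The main obstacle is precisely the collar step: a balanced separator delivered by Theorem~\ref{thm-iterweak} does not by itself detach the collar from the interior of $A$, and forcing it to do so — equivalently, identifying the hub vertices that have to go into $Z$ and bounding their number by $\exp(\mathrm{poly}(t))\log^{(a)}n$ rather than by something polynomial in $n$ — is where the polynomial expansion hypothesis is used essentially, via the boundedness of $\wcol_{O(r)}(G)$ granted by Lemma~\ref{lemma-wcolw} (recall $r$ is fixed). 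The remaining bookkeeping — passing between $G$, $G-Z$, and the distance-$\ell$ auxiliary graphs, and tracking a slightly-better-than-$\tfrac23$ balance factor through the $O(r)$ rounds so that the final parts still have size at most $\tfrac23 n$ — is routine.
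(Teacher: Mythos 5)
There is a genuine gap, and you have in fact located it yourself: the ``collar step'' is not an obstacle you overcome but one you leave open, and the tool you invoke for it cannot do the job. Theorem~\ref{thm-iterweak} produces a \emph{balanced} separator of $G[A]$ --- a set whose removal leaves no component with more than $2/3$ of some weight --- whereas what your round requires is a cheap cut separating a \emph{prescribed} set (the collar $A\setminus A'$) from its complement $A'$. These are different problems: the collar may well be all of $A$ (e.g.\ when $S$ contains a vertex adjacent to most of $A$), in which case no such cut exists and one is forced to identify ``hub'' vertices and discard them into $Z$; you give no cost assignment or other mechanism that makes hubs expensive, no rule for selecting them, and no argument bounding their number by $\exp(\mathrm{poly}(t))\log^{(a)}n$. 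Saying that this is ``where the polynomial expansion hypothesis is used essentially, via the boundedness of $\wcol_{O(r)}(G)$'' names the missing ingredient without supplying it. Your opening reduction to the two-sided distance-$\ell$ power graph is also a dead end as stated: that graph does not inherit polynomial expansion (the square of a star is a clique), so Theorem~\ref{thm-iterweak} cannot be applied to it.

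For comparison, the paper resolves exactly this difficulty in one shot rather than by erosion. It fixes an ordering $\prec$ and passes to the \emph{one-sided} power $G^{[\prec,r]}$, whose expansion is polynomially bounded by Lemma~\ref{lemma-exp}, and applies the main theorem there with the cost assignment $\rho(u)=|\{v: u\in L^\prec_r(v)\}|$. This cost function is precisely the device that makes hub vertices expensive, so they end up among the outliers $Z_0$, whose number the main theorem already controls; the cheap part $C$ of the separator is then blown up to $C'=\bigcup_{u\in C}\{v: u\in L^\prec_r(v)\}$, which has size at most $\rho(C)\le n/t$, and the ordered-power structure (the $\prec$-minimum vertex of any path of length at most $r$ between the two sides must lie in $C\cup Z_0$) guarantees that $C'\setminus Z$ is a distance-$r$ separator of $G-Z$. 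If you want to salvage your round-by-round scheme, you would need to import this idea --- an ordered auxiliary graph with bounded expansion plus a reachability-counting cost function --- at which point the iteration becomes unnecessary.
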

Theorem~\ref{thm-dist} is proved by applying Theorem~\ref{thm-iterweak} to a graph arising from the weak coloring number
at distance $r$; see the next section for more details.

A more direct application is to \emph{balanced edge separators}, sets of edges whose removal splits the graph into components
with at most $2/3$ of the vertices.  Again, as the example of stars shows, even very simple graphs do not admit
sublinear balanced edge separators.  However, applying Theorem~\ref{thm-iterweak} with the cost function $\rho(v)=\deg v$
and including in the edge separator all the edges incident with the non-outlier vertices,
we obtain the following result.
\begin{corollary}\label{cor-edge}
For every polynomial $p:\mathbb{N}\to\mathbb{N}$ and integer $a\ge 0$, there exists a polynomial $q:\mathbb{N}\to\mathbb{N}$ such that the following holds.
Let $G$ be an $n$-vertex graph with expansion bounded by $p$.
For every integer $t\ge 1$, there exists a set $Z\subseteq V(G)$ of size at most $q(t)\wcol^5_{q(t)}(G)\log^{(a)} n$
such that $G-Z$ has a balanced edge separator of size at most $2|E(G)|/t$.
\end{corollary}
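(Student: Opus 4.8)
The plan is to derive Corollary~\ref{cor-edge} directly from Theorem~\ref{thm-iterweak} by a reduction that converts a cheap (vertex-)separator into an edge separator. First I would set up the cost assignment: let $\rho(v)=\deg_G v$ for every $v\in V(G)$, so that $\rho(G)=\sum_{v}\deg_G v=2|E(G)|$. Apply Theorem~\ref{thm-iterweak} with this $\rho$, the given polynomial $p$, and the given integer $a$; this yields a polynomial $q$ (depending only on $p$ and $a$) and a balanced separator $C\subseteq V(G)$ which is $(\rho/t)$-cheap with $q(t)\wcol^5_{q(t)}(G)\log^{(a)}n$ outliers. Thus there is a set $Z\subseteq C$ with $|Z|\le q(t)\wcol^5_{q(t)}(G)\log^{(a)}n$ and $\rho(C\setminus Z)\le \rho(G)/t=2|E(G)|/t$.

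Next I would build the edge separator. Let $F$ be the set of all edges of $G$ incident with at least one vertex of $C\setminus Z$. Then $|F|\le \sum_{v\in C\setminus Z}\deg_G v=\rho(C\setminus Z)\le 2|E(G)|/t$, which is the desired size bound. It remains to check that $F$ is a balanced edge separator of $G-Z$: I claim every component $K$ of $(G-Z)-F$ satisfies $|V(K)|\le\tfrac23|V(G-Z)|\le\tfrac23 n$. The key observation is that $(G-Z)-F$ has no edge between $C\setminus Z$ and the rest of $G-Z$ (every such edge lies in $F$), and in fact every vertex of $C\setminus Z$ is isolated in $(G-Z)-F$ — all its incident edges are in $F$. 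Hence each component $K$ of $(G-Z)-F$ is either a single vertex of $C\setminus Z$, or is entirely contained in $V(G)\setminus C=V(G-C)$ and moreover lies within a single component of $G-C$ (since deleting $Z\subseteq C$ and some edges only breaks things up further). Because $C$ is a balanced separator of $G$, each component of $G-C$ has at most $\tfrac23 n$ vertices, and single vertices trivially satisfy the bound; so every component of $(G-Z)-F$ has at most $\tfrac23 n$ vertices, as required.

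I do not expect a genuine obstacle here: the argument is essentially bookkeeping once Theorem~\ref{thm-iterweak} is in hand. The one point that needs a little care is making sure the balance is measured correctly — Theorem~\ref{thm-iterweak} guarantees components of $G-C$ have at most $\tfrac23|V(G)|$ vertices, and we want components of $(G-Z)-F$ to have at most $\tfrac23|V(G-Z)|$ vertices; since $|V(G-Z)|\le|V(G)|$ and deleting vertices/edges only refines the component structure, the bound $\tfrac23 n\ge\tfrac23|V(G-Z)|$ is comfortably satisfied, so no re-normalization is needed. (If one insisted on the stronger phrasing with $\tfrac23|V(G-Z)|$ on the right, the same inequality still works since every component of $(G-Z)-F$ is contained in a component of $G-C$ and $|V(G-C)|\le n$, but in any case the weaker bound against $n$ is what the statement asks for.)
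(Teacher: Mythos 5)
Your derivation is exactly the route the paper sketches for this corollary: set $\rho(v)=\deg_G v$, apply Theorem~\ref{thm-iterweak}, let $Z$ be the outlier set, and take as the edge separator all edges incident with the non-outlier vertices of the separator. The size bound $|F|\le\rho(C\setminus Z)\le 2|E(G)|/t$ and the component analysis (each vertex of $C\setminus Z$ becomes isolated, every other component of $(G-Z)-F$ lies inside a component of $G-C$) are correct.

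The one step that does not work as written is the balance normalization at the end. What you need is that every component of $(G-Z)-F$ has at most $\tfrac{2}{3}|V(G-Z)|=\tfrac{2}{3}(n-|Z|)$ vertices, but what you have established is the bound $\tfrac{2}{3}n$, and the inequality $\tfrac{2}{3}n\ge\tfrac{2}{3}|V(G-Z)|$ that you invoke points in the wrong direction: a component of size at most $\tfrac{2}{3}n$ may well exceed $\tfrac{2}{3}(n-|Z|)$. (The definition of a balanced edge separator of $G-Z$ is relative to the vertex count of $G-Z$, not of $G$; the paper treats the identical issue explicitly in the proof of Theorem~\ref{thm-dist}.) The repair is the same device used there: group the components of $G-C$ into two parts $A$ and $B$ with $|A|,|B|\le\tfrac{2}{3}n$, move $\min\bigl(2|Z|,\bigl||A|-|B|\bigr|\bigr)$ vertices of the larger part into $Z$, and observe that afterwards either the two parts have equal size at most $\tfrac{1}{2}|V(G-Z)|$ or the larger has size at most $\tfrac{2}{3}n-2|Z_0|\le\tfrac{2}{3}(n-|Z|)$, where $Z_0$ is the original outlier set. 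This at most triples $|Z|$, which is absorbed into the polynomial $q$. With that adjustment your proof is complete and coincides with the paper's intended argument.
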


We will prove Theorem~\ref{thm-iterweak} in the setting of separators whose balance is affected by weights of vertices
(this makes the result slightly more general, but additionally we need to use the weights in the proof).
For a weight assignment $w:V(G)\to\mathbb{R}_0^+$, a \emph{$w$-balanced separator} in a graph $G$ is a set $C\subseteq V(G)$ such
that each component $K$ of $G-C$ satisfies $w(K)\le \tfrac{2}{3}w(G)$.
We define $\eta(G,t)$ as the minimum integer $q$ such that for every weight assignment $w$ and cost assignment $\rho$,
there exists a $w$-balanced separator in $G$ which is $(\rho/t)$-cheap with $q$ outliers.

Let us remark that at least in this weighted setting, we cannot replace $q(t)$ by a constant independent of $t$ in the statement of Conjecture~\ref{conj-main}.
For $s\ge 3$, let $G_{s,n}$ be the graph obtained from the complete bipartite graph $K_{s,n}$ with parts $A$ and $B$ by subdividing each edge $s-2$ times.
Any depth-$r$ minor of $G_{s,n}$ is $2$-degenerate if $r<(s-2)/2$ and $s$-degenerate (and thus $(2r+2)$-degenerate)
if $r\ge (s-2)/2$, implying that $\nabla_r(G_{s,n})\le 2r+2$.
Let $w$ be the weight assignment for $G_{s,n}$ such that $w(y)=1$ for all $y\in B$ and $w(x)=0$ for all other vertices $x$.
Let $\rho$ be the cost assignment such that $\rho(x)=n$ for $x\in A$, $\rho(x)=s$ for $x\in B$, and $\rho(x)=1$ otherwise.
We have $\rho(G_{s,n})=s^2n$.  Consider an optimal $w$-balanced separator $C$ with $q<s$ outliers.
Since $C$ is $w$-balanced, for each $v\in A\setminus C$, $G_{s,n}$ contains at least $n/3$ paths from $v$ to $B$ that intersect $C$
(possibly in their last vertex).  This shows that $\rho(C\setminus A)\ge |A\setminus C|n/3$ (the vertices belonging to $B\cap C$ can
be counted for several vertices of $A$, but this is correct since their cost is $s$).
Hence, the cost of non-outlier vertices of $C$ is at least $\max(0,|C\cap A|-q)n+(s-|C\cap A|)n/3-\max(0,q-|C\cap A|)s$.
This expression is minimized for $n>3s$ when $|C\cap A|=q$, giving the lower bound $(s-q)n/3$ on the cost of non-outlier vertices.
If $(s-q)n/3\le \rho(G_{s,n})/t=s^2n/t$, then $q \ge s - 3s^2/t$.  For $s=t/6$, this implies $q\ge t/12$.
Hence, there exist graphs $G_t=G_{t/6,t/2+1}$ for $t=6,12,\ldots$ with $\nabla_r(G_t)\le 2r+2$ for each $r\ge 0$
and with $\eta(G,t)=\Omega(t)$.

\section{Weak coloring numbers, expansion, and distances}

Given a graph $G$, a linear ordering $\prec$ of its vertices, and an integer $m\ge 1$,
let $G^{[\prec,m]}$ denote the graph with vertex set $V(G)$ and the edges $uv$
for all $v\in V(G)$ and $u\in L^\prec_m(v)$.  We need a bound on the expansion of this graph
in terms of the expansion of $G$.

\begin{lemma}\label{lemma-exp}
Let $G$ be a graph, let $\prec$ be a linear ordering of its vertices, and let $m\ge 1$ and $r\ge 0$
be integers.  Then
$$\nabla_r\bigl(G^{[\prec,m]}\bigr)\le \bigl(\wcol_m^\prec(G)\bigr)^2(2m+1)^2(2r+1)^2\nabla_{(2m+1)r+m}(G)+\wcol_m^\prec(G).$$
In particular, if the expansion of $G$ is bounded by $p(r)=c(r+1)^k$ for some $c\ge 1$ and $k\ge 0$,
then the expansion of $G^{[\prec,m]}$ is bounded by $p'(r)=c'(r+1)^{k+2}$ for
$$c'=5\bigl(\wcol_m^\prec(G)\bigr)^2(2m+1)^{k+2}c.$$
\end{lemma}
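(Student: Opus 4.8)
The plan is to take a depth-$r$ minor $H$ of $G^{[\prec,m]}$, witnessed by vertex-disjoint branch sets $(B_v : v\in V(H))$, each of radius at most $r$ in $G^{[\prec,m]}$, and to ``unfold'' it into a shallow minor of $G$ itself, so that the density bound on $H$ follows from $\nabla_\ell(G)$ for an appropriate $\ell$. The point is that every edge of $G^{[\prec,m]}$ corresponds to a path of length at most $m$ in $G$ (by definition of $(\prec,m)$-reachability), so a path of length $s$ in $G^{[\prec,m]}$ lifts to a walk of length at most $sm$ in $G$; in particular each $B_v$, having $G^{[\prec,m]}$-radius $\le r$, lifts to a connected subgraph $B_v'$ of $G$ of radius at most $rm$. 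These lifted sets need not be disjoint, but the overlaps are controlled: a vertex $x\in V(G)$ can lie in $B_v'$ only if one of the at-most-$m$-length connecting paths used in assembling $B_v'$ passes through $x$, and the endpoints of such a path reaching $x$ lie in $L^\prec_m(x)$ or have $x\in L^\prec_m(\cdot)$; bounding how many branch sets can ``claim'' a given vertex $x$ in this way is exactly where $\wcol^\prec_m(G)$ enters.

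The main technical step, and the one I expect to be the real obstacle, is turning the non-disjoint lifted family into a genuine depth-$\ell$ minor with only a polynomial loss. I would do this by a standard cleanup: first contract each connected lifted set to handle the ``core'' of each branch vertex, then argue that the multiplicity with which any vertex of $G$ is used is at most roughly $\wcol^\prec_m(G)\cdot(2m+1)$, so that by taking an appropriate sub-minor (deleting or rerouting to break the overlaps) one loses at most a factor polynomial in $\wcol^\prec_m(G)$, $m$ and $r$ in the number of edges — this produces the squared $\wcol^\prec_m(G)$ and the $(2m+1)^2(2r+1)^2$ factors in the stated bound, with the additive $\wcol^\prec_m(G)$ term absorbing the degenerate case (e.g.\ $r=0$, where $G^{[\prec,m]}$ itself has each vertex of ``back-degree'' at most $\wcol^\prec_m(G)$). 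The radius bookkeeping gives $\ell = (2m+1)r + m$: a path of length $2r+1$ between branch sets in $G^{[\prec,m]}$ together with radii $r$ inside the branch sets, all multiplied by $m$ under the lift, plus lower-order corrections, lands within this bound, so the density of $H$ is at most (multiplicity loss)${}\cdot\nabla_{(2m+1)r+m}(G)$ as claimed.

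For the ``in particular'' clause, I would simply substitute $\nabla_{(2m+1)r+m}(G)\le c\bigl((2m+1)r+m+1\bigr)^k\le c(2m+1)^k(r+1)^k$ into the main inequality and bound the resulting polynomial in $r$: the leading term is $\bigl(\wcol^\prec_m(G)\bigr)^2(2m+1)^2(2r+1)^2\cdot c(2m+1)^k(r+1)^k \le 4\bigl(\wcol^\prec_m(G)\bigr)^2(2m+1)^{k+2}c\,(r+1)^{k+2}$, and the additive $\wcol^\prec_m(G)$ term is dominated, so taking $c' = 5\bigl(\wcol^\prec_m(G)\bigr)^2(2m+1)^{k+2}c$ and degree $k+2$ suffices; monotonicity of $p$ and $p'$ in $r$ makes the verification routine once the main inequality is in hand.
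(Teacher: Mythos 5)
Your overall strategy is in the right spirit, and your derivation of the ``in particular'' clause from the main inequality is fine. The problem is the step you yourself flag as ``the real obstacle'': converting the non-disjoint lifted family into a genuine depth-$\ell$ minor of $G$ with only a polynomial loss in density. You dispose of this with ``a standard cleanup\dots deleting or rerouting to break the overlaps,'' but that is an assertion, not an argument: deleting the vertices where branch sets overlap can disconnect them, ``rerouting'' is never defined, and even your congestion estimate (that each vertex of $G$ is claimed by at most roughly $\wcol_m^\prec(G)\cdot(2m+1)$ lifted branch sets) is only sketched and depends on which witnessing paths are chosen. This conversion is precisely the content of the result the paper invokes (Lemma~3.10 of Har-Peled and Quanrud), which bounds the shallow-minor density of a graph represented by a system of connected, low-radius subgraphs of $G$ with bounded ply; without that lemma or an equivalent argument, your proof has a genuine gap at its central step, and the specific constants $\bigl(\wcol_m^\prec(G)\bigr)^2(2m+1)^2(2r+1)^2$ and the depth $(2m+1)r+m$ cannot be extracted from what you wrote.

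For comparison, the paper does not lift a given depth-$r$ minor of $G^{[\prec,m]}$ edge by edge. Instead it represents $G^{[\prec,m]}$ itself inside $G$: each vertex $u$ is assigned the subgraph $R^\prec_m(u)$ of $G$ induced by $\{v: u\in L^\prec_m(v)\}$, which is connected of radius at most $m$; every edge $uv$ of $G^{[\prec,m]}$ forces $R^\prec_m(u)\cap R^\prec_m(v)\neq\emptyset$; and each vertex $v$ of $G$ lies in exactly $|L^\prec_m(v)|\le\wcol_m^\prec(G)$ of these subgraphs, so the ply bound is immediate rather than estimated. The cited lemma then yields the stated inequality directly. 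If you want a self-contained proof, you would need to prove that conversion lemma yourself (e.g.\ by selecting, for each depth-$r$ branch set of $G^{[\prec,m]}$, a representative system of the $R^\prec_m(u)$'s and charging lost edges to the ply), which is a real piece of work rather than routine bookkeeping.
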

\begin{proof}
For every $u\in V(G)$, let $R^\prec_m(u)$ be the subgraph of $G$ induced by $\{v\in V(G):u\in L^\prec_m(v)\}$.
Note that $v\in V(R^\prec_m(u))$ if and only if $G$ contains a path $P$ from $u$ to $v$ of length at most $r$
such that $u\preceq x$ for every $x\in V(P)$.  All vertices of such a path also belong to $V(R^\prec_m(u))$.
Consequently, $R^\prec_m(u)$ is connected and has radius at most $m$.  Moreover, every edge $uv\in E(G^{[\prec,m]})$
satisfies $R^\prec_m(u)\cap R^\prec_m(v)\neq \emptyset$, and every vertex $v\in V(G)$ belongs to $V(R^\prec_m(u))$
for $|L^\prec_m(v)|\le \wcol_m^\prec(G)$ vertices $u\in V(G)$.
The bound on $\nabla_r\bigl(G^{[\prec,m]}\bigr)$ follows by Lemma~3.10 of Har-Peled and Quanrud~\cite{har2015approximationjrnl}.
\end{proof}

For a set $X\subseteq V(G)$, let $G^{[\prec,m]/X}$ denote the induced subgraph of $G^{[\prec,m]}$ with vertex set
$\bigcup_{x\in X} L^\prec_m(x)$; note that $|V(G^{[\prec,m]/X})|\le \wcol^\prec_m(G)|X|$.
\begin{lemma}\label{lemma-sep}
Let $G$ be a graph, let $\prec$ be a linear ordering of its vertices, let $m\ge 1$ be an integer, and let $X$ be a set of vertices of $G$.
Let $C$ be a set of vertices of $G^{[\prec,m]/X}$.
If $x,y\in X$ belong to different components of $G^{[\prec,m]/X}-C$, then $d_{G-C}(x,y)>m$.
\end{lemma}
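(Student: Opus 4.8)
The plan is to show that any walk in $G-C$ between $x$ and $y$ of length at most $m$ would force $x$ and $y$ into the same component of $G^{[\prec,m]/X}-C$, contradicting the hypothesis. So suppose for contradiction that $d_{G-C}(x,y)\le m$, and fix a path $P=z_0z_1\cdots z_\ell$ in $G-C$ with $z_0=x$, $z_\ell=y$, and $\ell\le m$. The natural object to look at is the $\prec$-minimum vertex of $P$; call it $w$, so $w\preceq z_i$ for every $i$. The key observation is that $P$ witnesses $w\in L^\prec_m(z_0)=L^\prec_m(x)$ and $w\in L^\prec_m(z_\ell)=L^\prec_m(y)$: splitting $P$ at $w$ gives a subpath from $x$ to $w$ of length $\le m$ all of whose vertices are $\succeq w$, and similarly from $y$ to $w$. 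Hence $w$ is a vertex of $G^{[\prec,m]/X}$ (since $w\in L^\prec_m(x)$ with $x\in X$), and $xw,yw\in E(G^{[\prec,m]})$, so these are edges of the induced subgraph $G^{[\prec,m]/X}$ as well.

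It remains to check that $w\notin C$ and, more generally, to handle the case where $w$ might coincide with $x$ or $y$. If $w=x$ then $xy\in E(G^{[\prec,m]/X})$ directly (taking $P$ itself as the witnessing path, with minimum vertex $x=w\preceq y$), and since $x,y\notin C$ this edge survives in $G^{[\prec,m]/X}-C$; symmetrically if $w=y$. Otherwise $w$ is distinct from both $x$ and $y$. Since $w\in V(P)$ and $P$ lies in $G-C$, we have $w\notin C$; thus $w$ is a vertex of $G^{[\prec,m]/X}-C$ adjacent to both $x$ and $y$ there, placing $x$ and $y$ in the same component. In every case we obtain that $x$ and $y$ lie in the same component of $G^{[\prec,m]/X}-C$, the desired contradiction. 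Therefore $d_{G-C}(x,y)>m$.

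I expect the argument to be essentially routine; the only point requiring a little care is verifying that the witnessing subpaths for $w\in L^\prec_m(x)$ and $w\in L^\prec_m(y)$ genuinely have the "minimum at the endpoint" property — this follows because $w$ was chosen to be $\prec$-minimum over all of $V(P)$, hence $\prec$-minimum over any subpath containing it — and that each such subpath has length at most $\ell\le m$. A secondary subtlety is making sure all intermediate vertices of these subpaths, which need not lie in $X$, are nonetheless vertices of $G^{[\prec,m]/X}$; but we never need that, since to conclude we only use the two edges $xw$ and $yw$ of the graph $G^{[\prec,m]/X}$, whose endpoints $x,y\in X$ and $w\in L^\prec_m(x)$ are all present by definition. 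Thus the main (modest) obstacle is bookkeeping around the degenerate cases $w\in\{x,y\}$, which the case analysis above dispatches.
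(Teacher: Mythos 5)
Your proposal is correct and follows essentially the same argument as the paper: take a hypothetical short path $P$ from $x$ to $y$ in $G-C$, look at its $\prec$-minimum vertex, and use the two witnessing subpaths to produce edges of $G^{[\prec,m]/X}$ that contradict $x$ and $y$ lying in different components. The degenerate case $w\in\{x,y\}$ and the membership of $w$ in $V(G^{[\prec,m]/X})$ are handled exactly as in the paper's proof.
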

\begin{proof}
Suppose for a contradiction there exists a path $P$ of length at most $m$ from $x$ to $y$ in $G-C$,
and let $z=\min_\prec V(P)$.  If say $z=x$, then $P$ shows that $xy\in G^{[\prec,m]}$, contradicting the assumption
that $x$ and $y$ belong to different components of $G^{[\prec,m]/X}-C$.  Hence, we have $x\neq z\neq y$.
The subpaths of $P$ from $x$ and $y$ to $z$ show that $z\in L^\prec_m(x)\cap L^\prec_m(y)$, and thus
$xz,yz\in E(G^{[\prec,m]/X})$.  Since $x$ and $y$ belong to different components of $G^{[\prec,m]/X}-C$,
it follows that $z\in C$, which is a contradiction.
\end{proof}

We are now ready to derive the distance version of our main result.
\begin{proof}[Proof of Theorem~\ref{thm-dist}]
By Lemma~\ref{lemma-wcolw}, there exists an integer $c_r$ such that every graph $G$ with expansion bounded by $p$
satisfies $\wcol_r(G)\le c_r$.  By Lemma~\ref{lemma-exp}, there exists a polynomial $p'$ such that for every such graph $G$,
there exists a linear ordering $\prec$ of $V(G)$ such that the expansion of $G^{[\prec,r]}$ is bounded by $p'$.
Let $q$ be the polynomial from Theorem~\ref{thm-iterlog} for the given $a$ and with $p'$ playing the role of $p$.
Let us define $q'(t)=q(c_rt)+2$.

Consider any graph $G$ with expansion bounded by $p$ and let $\prec$ be a linear ordering of $V(G)$ such that the expansion of
$G^{[\prec,r]}$ is bounded by $p'$.  For $u\in V(G)$, let $R(u)=\{v\in V(G):u\in L^\prec_m(v)\}$ and let $\rho(u)=|R(u)|$.
By Theorem~\ref{thm-iterlog} applied with $t'=c_rt$ playing the role of $t$, there exist sets $C,Z_0\subseteq V(G)$ such
that $|Z_0|\le \exp(q(t'))\log^{(a)} n=\tfrac{1}{e^2}\exp(q'(t))\log^{(a)} n$, $\rho(C)\le \rho(G)/t'$, and $C\cup Z_0$ is a balanced separator in $G^{[\prec,r]}$.
In particular, we can divide the components of $G^{[\prec,r]}-(C\cup Z_0)$ into two parts $A'$ and $B$ of size at most $\tfrac{2}{3}n$.
Let $C'=\bigcup_{u\in C} R(u)$ and $A=A'\setminus C'$.  We have $|C'|\le \rho(C)\le\rho(G)/t'\le c_rn/t'=n/t$.
Let $Z_1$ be a set of $\min(2|Z_0|,\max(|A|,|B|)-\min(|A|,|B|))$ vertices chosen arbitrarily from the larger of the sets $A$ and $B$.
Letting $Z=Z_0\cup Z_1$, note that
$|Z|\le 3|Z_0|\le \exp(q'(t))\log^{(a)} n$, and that either $|A\setminus Z|=|B\setminus Z|\le \tfrac{1}{2}|V(G-Z)|$, or
$\max(|A\setminus Z|,|B\setminus Z|)\le \tfrac{2}{3}n - 2|Z_0|=\tfrac{2}{3}(n-|Z|)=\tfrac{2}{3}|V(G-Z)|$.

It remains to argue that $C'\setminus Z$ is a distance-$r$ separator in $G-Z$.  Indeed, suppose for a contradiction that $P$ is a path
of length at most $r$ in $G-Z\subseteq G-Z_0$ from a vertex $v_1\in A$ to a vertex $v_2\in B$, and let $u=\min_\prec V(P)$.  Note that
$v_1uv_2$ is a path from $v_1$ to $v_2$ in $G^{[\prec,r]}$, and since $u\not\in Z_0$ and $v_1$ and $v_2$ belong to different
components of $G^{[\prec,r]}-(C\cup Z_0)$, we have $u\in C$.  However, then $v_1\in R(u)\subseteq C'$.  This is a contradiction,
since $v_1\in A$.
\end{proof}

\section{Separators with few outliers}

For integers $t,b\ge 1$, let $\ell(t,b)$ be the minimum integer $\ell$ such that $(1+1/t)^\ell>b$.
Since $(1+1/t)^t\ge 2$, we have $\ell(t,b)\le t\lceil\log_2(b+1)\rceil$.
Consider a graph $G$ with a weight assignment $w$ and a cost assignment $\rho$.
For a set $X\subseteq V(G)$, let $N_G(X)$ denote the set of vertices in $V(G)\setminus X$ with a neighbor in $X$.  We say that $G$ is
a \emph{$(w,\rho,t)$-expander} if $\rho(G)\neq 0$ and for every $X\subseteq V(G)$ such that $w(X)\le w(G)/2$, we have $\rho(N_G(X))\ge \rho(X)/t$.
\begin{lemma}\label{lemma-dist}
Let $t,b_1,b_2\ge 1$ be integers.
Let $G$ be a graph and let $w$ and $\rho$ be a weight and a cost assignment for $G$
such that $G$ is a $(w,\rho,t)$-expander.  For any $X_1,X_2\subseteq V(G)$, if
$\rho(X_1)\ge \rho(G)/b_1$ and $\rho(X_2)\ge \rho(G)/b_2$, then $d_G(X_1,X_2)\le \ell(t,b_1)+\ell(t,b_2)$.
\end{lemma}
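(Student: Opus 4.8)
The plan is to use the expander condition to grow a "ball" around $X_1$ whose cost keeps multiplying by $(1+1/t)$ until it captures more than half the total weight, and to do the symmetric thing around $X_2$; the two balls must then meet. Concretely, for $i\ge 0$ define $B^1_i$ to be the set of vertices at distance at most $i$ from $X_1$ in $G$, so $B^1_0\supseteq X_1$ and $B^1_{i+1}=B^1_i\cup N_G(B^1_i)$. The key observation is that as long as $w(B^1_i)\le w(G)/2$, the expander property gives $\rho(N_G(B^1_i))\ge \rho(B^1_i)/t$, hence $\rho(B^1_{i+1})=\rho(B^1_i)+\rho(N_G(B^1_i))\ge (1+1/t)\rho(B^1_i)$. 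Starting from $\rho(B^1_0)\ge \rho(X_1)\ge \rho(G)/b_1$ and iterating, after $j$ steps (while the weight bound still holds) we get $\rho(B^1_j)\ge (1+1/t)^j\rho(G)/b_1$. By the definition of $\ell(t,b_1)$, once $j=\ell(t,b_1)$ we would have $(1+1/t)^j>b_1$ and so $\rho(B^1_j)>\rho(G)$, which is impossible; the only way to escape the contradiction is that the hypothesis $w(B^1_i)\le w(G)/2$ failed for some $i<\ell(t,b_1)$. So there is a smallest index $i_1\le \ell(t,b_1)$ with $w(B^1_{i_1})>w(G)/2$; note we still have $B^1_{i_1}=B^1_{i_1-1}\cup N_G(B^1_{i_1-1})$ and the growth bound applied up to step $i_1-1$, so in fact $i_1\le\ell(t,b_1)$ as claimed (it is cleanest to just say: the set $B^1_{\ell(t,b_1)}$ of vertices within distance $\ell(t,b_1)$ of $X_1$ has $w(B^1_{\ell(t,b_1)})>w(G)/2$).

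Doing the same for $X_2$, the set $B^2$ of vertices within distance $\ell(t,b_2)$ of $X_2$ satisfies $w(B^2)>w(G)/2$. Since $w(B^1_{\ell(t,b_1)})+w(B^2)>w(G)\ge w(B^1_{\ell(t,b_1)}\cup B^2)\ge w(B^1_{\ell(t,b_1)})+w(B^2)-w(B^1_{\ell(t,b_1)}\cap B^2)$, we get $w(B^1_{\ell(t,b_1)}\cap B^2)>0$, so the two sets intersect; in particular there is a vertex $z$ within distance $\ell(t,b_1)$ of $X_1$ and within distance $\ell(t,b_2)$ of $X_2$. By the triangle inequality for graph distance, $d_G(X_1,X_2)\le d_G(X_1,z)+d_G(z,X_2)\le \ell(t,b_1)+\ell(t,b_2)$, as desired.

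There is one routine point to handle carefully. The expander definition requires $\rho(G)\ne 0$, which is given, and it only guarantees $\rho(N_G(X))\ge \rho(X)/t$ when $w(X)\le w(G)/2$; the argument above applies it only in exactly that regime, so this is fine. One should also note that the growth step $\rho(B^1_{i+1})\ge (1+1/t)\rho(B^1_i)$ is only useful while $\rho(B^1_i)>0$, which holds since $\rho(B^1_0)\ge \rho(G)/b_1>0$. Finally, strictly speaking the balls $B^1_i$ might stop growing (become equal to all of $V(G)$, or stabilize) before the weight threshold is crossed — but if $B^1_{i+1}=B^1_i$ then $N_G(B^1_i)=\emptyset$, so $\rho(N_G(B^1_i))=0$, which by the expander property forces $\rho(B^1_i)=0$ when $w(B^1_i)\le w(G)/2$, contradicting $\rho(B^1_i)\ge\rho(G)/b_1>0$; hence as long as the weight stays at most $w(G)/2$ the ball strictly grows in cost, and the counting argument goes through.

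The main obstacle is essentially just bookkeeping: making sure the index at which the weight first exceeds $w(G)/2$ is genuinely at most $\ell(t,b_1)$ (not $\ell(t,b_1)+1$), and that the "meeting" argument is stated so the intersecting vertex yields the correct additive distance bound. Both are handled by the observation that $B^1_{\ell(t,b_1)}$ must already have weight exceeding $w(G)/2$, since otherwise the cost multiplication over $\ell(t,b_1)$ steps would push $\rho$ strictly above $\rho(G)$.
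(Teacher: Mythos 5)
Your proof is correct and uses essentially the same argument as the paper: iterated application of the expander condition shows that the cost of the ball around $X_j$ multiplies by $(1+1/t)$ at each step while its weight stays at most $w(G)/2$, forcing the ball of radius $\ell(t,b_j)$ to capture more than half the weight, so the two balls meet. The paper merely organizes this in the contrapositive direction (assume the balls are disjoint, so one has weight at most $w(G)/2$, and then its cost would exceed $\rho(G)$), but the mechanism is identical.
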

\begin{proof}
For $i\ge 0$ and $j\in\{1,2\}$, let $X_j^i$ denote the set of vertices of $G$ at distance at most $i$ from $X_j$.
Suppose for a contradiction that $d_G(X_1,X_2)>\ell(t,b_1)+\ell(t,b_2)$, and thus $X_1^{\ell(t,b_1)}\cap X_2^{\ell(t,b_2)}=\emptyset$
and $w\bigl(X_1^{\ell(t,b_1)}\bigr)+w\bigl(X_2^{\ell(t,b_2)}\bigr)\le w(G)$.
By symmetry, we can assume $w\bigl(X_1^{\ell(t,b_1)}\bigr)\le w(G)/2$.
Since $G$ is a $(w,\rho,t)$-expander, for $1\le i\le \ell(t,b_1)$, we have
$\rho(X_1^i)\ge (1+1/t)\rho(X_j^{i-1})$, and thus
$\rho(X_1^{\ell(t,b_1)})\ge (1+1/t)^{\ell(t,b_1)}\rho(X_1)\ge (1+1/t)^{\ell(t,b_1)}\rho(G)/b_1>\rho(G)$,
by the definition of $\ell(t,b_1)$ and the assumption that $\rho(G)\neq 0$.  This is a contradiction.
\end{proof}

For a graph $G$ and an integer $l\ge 1$, let $\omega_l(G)$ denote the largest clique that appears in $G$ as a depth-$l$ minor.
Clearly, $\omega_l(G)\le 2\nabla_l(G)+1$.
A \emph{tight depth-$l$ clique minor} in $G$ is a clique minor $K$ of depth $l$ where the vertex set of every bag
is covered by at most $\omega_l(G)-1$ paths of length at most $l$ with the same starting point.  Note this implies
$|V(K)|\le \omega_l(G)((\omega_l(G)-1)l+1)\le \omega^2_l(G)l$.
For a positive integer $n'$, let $\eta_{\le\!n'}(G,t)$ denote the maximum of $\eta(H,t)$ over all subgraphs $H$ of $G$ with at most $n'$ vertices.

Let us now prove a key result relating the number of outliers in a graph~$G$ to the number of outliers in a subgraph of $G$ with
polylogarithmic number of vertices.  Theorem~\ref{thm-iterweak} will follow by iterating this result.
The argument is based on the proof of Plotkin et al.~\cite{plotkin}, the novel idea being the way we break up the expensive
vertices in part (c).

\begin{theorem}\label{thm-main}
Let $G$ be a graph with $n$ vertices, let $\prec$ be a linear ordering of the vertices of $G$ and let $t\ge 1$ be an integer.
Let $l=2\ell(5t,5tn)$, $b=\omega^2_l(G)l$, $m=2\ell(5t,20t)$, $r=\lceil\log_{9/8} (20t/3)\rceil$, and
$n'=5bt\wcol_m^\prec(G)$.
Then
$$\eta(G,t)\le r\eta_{\le\!n'}(G^{[\prec,m]},5rt).$$
\end{theorem}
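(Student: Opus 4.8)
The plan is to follow the recursive strategy of Plotkin et al.\ for producing a balanced separator as a union of roughly $r$ ``cheap'' levels, but to reduce the whole construction to a separator problem on the much smaller auxiliary graph $G^{[\prec,m]}$. Fix a weight assignment $w$ and cost assignment $\rho$ on $G$; we may assume $\rho(G)\neq 0$. First I would deal with the case where $G$ fails to be a $(w,\rho,5t)$-expander: there is then a set $X$ with $w(X)\le w(G)/2$ and $\rho(N_G(X))<\rho(X)/(5t)$; then $N_G(X)$ is a reasonably cheap separator of $G$ splitting it into $X$ and the rest, and one recurses on the side containing the majority of the weight, controlling the number of outliers. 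The point of the parameter choices is that this ``unbalanced cut'' case can occur only boundedly often (a geometric/potential argument on $\rho$ of the heavy side), so the expander case is the real content.

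So assume $G$ is a $(w,\rho,5t)$-expander. The key qualitative consequence, via Lemma~\ref{lemma-dist}, is that any two vertex sets of cost at least $\rho(G)/(5tn)$ are within distance $\ell(5t,5tn)$; more usefully, balls of radius about $\ell(5t,20t)$ around sets of cost $\ge \rho(G)/(20t)$ already capture a constant fraction of the weight. Now I would do a region-growing / BFS-layering argument from a carefully chosen ``center'' set: grow layers in $G$, and because of the expander property the cost of the ball grows geometrically by a factor $(1+1/(5t))$ per step until it is large; this produces, within $r=\lceil\log_{9/8}(20t/3)\rceil$ annuli, a family of $r$ candidate separating layers whose \emph{total} cost is small relative to $\rho(G)$, one of which we will use at each of the $r$ recursion levels. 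The depth of each layer is at most $m/2$, which is exactly why we pass to $G^{[\prec,m]}$: contracting reachability sets $L^\prec_{m}$, a low-depth ball in $G$ projects to something controlled in $G^{[\prec,m]}$, and Lemma~\ref{lemma-sep} guarantees that a separator of $G^{[\prec,m]/X}$ lifts to a distance-$m$ separator of $G-C$, hence (since we only ever need to separate sets already at distance $>m$) to an honest separator of $G$.

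The core reduction step is then: identify a set $X\subseteq V(G)$ with $|X|$ small (bounded in terms of $b$, $t$, and a tight depth-$l$ clique minor argument — this is where part~(c), breaking up the expensive vertices, enters), apply the definition of $\eta$ to the subgraph $G^{[\prec,m]/X}$ of $G^{[\prec,m]}$, which has at most $\wcol^\prec_m(G)|X|\le n'$ vertices, with an appropriately rescaled cost assignment and with weights inherited from $w$; this yields a $w$-balanced, $(\rho/(5rt))$-cheap separator of $G^{[\prec,m]/X}$ with at most $\eta_{\le n'}(G^{[\prec,m]},5rt)$ outliers. Lifting it through Lemma~\ref{lemma-sep} and combining it with the region-growing layers — one fresh invocation per level, $r$ levels in all, each contributing cost $\le \rho(G)/(5rt)$ of non-outlier vertices and $\eta_{\le n'}(G^{[\prec,m]},5rt)$ outliers — gives total non-outlier cost $\le\rho(G)/t$ and total outlier count $\le r\,\eta_{\le n'}(G^{[\prec,m]},5rt)$, which is the claimed bound.

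I expect the main obstacle to be the bookkeeping that makes the recursion bottom out with only $r$ (rather than $\log n$) levels while keeping the vertex count of the auxiliary subgraph at $\mathrm{poly}(t)\cdot\wcol^\prec_m(G)\cdot\pll(n)$: one must show that after removing the $r$ layers produced by one region-growing phase, every remaining component either is already $w$-balanced or has had its ``excess weight'' cost reduced by a definite factor, so that the whole process terminates in the stated number of rounds. Controlling the size of the seed set $X$ — i.e.\ showing that a bounded number of low-depth balls suffice to meet every heavy component, which is precisely where the tight depth-$l$ clique minor bound $b=\omega^2_l(G)l$ and the novel splitting of expensive vertices are used — is the delicate quantitative heart of the argument; everything else is geometric series estimates of the kind already appearing in Lemmas~\ref{lemma-dist} and~\ref{lemma-exp}.
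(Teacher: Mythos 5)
Your outline assembles the right ingredients (the expander dichotomy, Lemma~\ref{lemma-dist}, the passage to an induced subgraph of $G^{[\prec,m]}$ on at most $n'$ vertices via Lemma~\ref{lemma-sep}, a geometric decay over $r$ rounds, a tight depth-$l$ clique minor, and the cost accounting $r\cdot\rho(G)/(5rt)$), but the mechanism you propose for the core step does not match a workable argument, and the part you yourself flag as ``the delicate quantitative heart'' is exactly what is missing. Concretely: the set $X$ fed into $G^{[\prec,m]/X}$ is not a seed set for BFS region-growing; it is the set of \emph{expensive} vertices, those with $\rho(v)>\rho(G)/(5bt)$ (of which there are trivially at most $5bt$, so $|V(G^{[\prec,m]/X})|\le n'$ — no clique-minor argument is needed for this bound). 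One forms the auxiliary graph $H$ on these expensive vertices with edges between pairs at distance at most $m$ in the current residual graph, and applies $\eta_{\le n'}$ to $G^{[\prec,m]/V(H)}$ with the weight assignment equal to $\rho$ \emph{on the expensive vertices} and $0$ elsewhere (not ``weights inherited from $w$''). The returned separator, lifted via Lemma~\ref{lemma-sep}, does not separate $G$; it splits $H$ so that the cost of its largest component drops by a factor $8/9$ (here the expander property plus Lemma~\ref{lemma-dist} guarantee that some component of $H$ carries at least $3/4$ of $\rho(H)$, which is what makes the $2/3$-balance of the auxiliary separator translate into an $8/9$ decay). This is precisely the ``novel way of breaking up the expensive vertices'' and it is what bottoms out after $r=\lceil\log_{9/8}(20t/3)\rceil$ rounds, after which all remaining expensive vertices together cost at most $\rho(G)/(5t)$ and are dumped into the separator wholesale. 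Your version — $r$ BFS annuli around a center, one layer used per level, components having their ``excess weight'' reduced — is not how the decay is achieved and I do not see how to make it work, since a single region-growing phase gives no control on the number of levels independent of $n$.

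You also misplace the role of the clique minor and of $b$. The clique minor is not used to bound the seed set; it is the Plotkin--Rao--Smith device for the remaining case in which the residual graph is a $(w,\rho,5t)$-expander, is still unbalanced, and contains \emph{no} expensive vertices: then Lemma~\ref{lemma-dist} (applied to single vertices, whose cost is at least $\rho(G)/(5tn)$ thanks to discarding the very cheap vertices into $C_0$ at the start, with $l=2\ell(5t,5tn)$) yields short paths to every bag, so the clique minor grows; tightness bounds its vertex count by $b$, and the threshold $\rho(G)/(5bt)$ for ``expensive'' is chosen exactly so that this set of at most $b$ cheap vertices costs at most $\rho(G)/(5t)$. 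Finally, the proof needs a global bookkeeping structure (in the paper, the tuples $(A_i,B_i,C_i,D_i,K_i,r_i)$ with six invariants and a lexicographic termination argument) that simultaneously tracks the unbalanced cuts, the discarded cheap vertices, the $r$ auxiliary separators, the outliers, and the clique minor, and verifies that the five cost contributions each stay below $\rho(G)/(5t)$. As written, your proposal is a plausible reading of the theorem statement rather than a proof; the decisive steps would need to be reworked along the lines above.
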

\begin{proof}
Let $w$ and $\rho$ be
a weight and a cost assignment for $G$.  We construct a
sequence of tuples $T_i=(A_i,B_i,C_i,D_i,K_i,r_i)$ (for $i=0,1,\ldots$), where $A_i$, $B_i$, $C_i$, $D_i$ and $K_i$
are pairwise disjoint subsets of $V(G)$ and $r_i\ge 0$ is an integer and for $i>0$ we have $A_{i-1}\cup K_{i-1}\subseteq A_i\cup K_i$,
$B_{i-1}\subseteq B_i$, $C_{i-1}\subseteq C_i$, $D_{i-1}\subseteq D_i$, and $r_{i-1}\le r_i$.
Let us define $R_i=G-(A_i\cup B_i\cup C_i\cup D_i\cup K_i)$,
and let $H_i$ be the graph with vertex set $\{v\in V(R_i):\rho(v)>\tfrac{1}{5bt}\rho(G)\}$
and with $uv\in E(H_i)$ if and only if $d_{R_i}(u,v)\le m$.

We maintain the following invariants for every $i\ge 0$:
\begin{itemize}
\item[(i)] $w(A_i)\le \tfrac{2}{3}w(G)$ and $N_G(A_i)\subseteq B_i\cup C_i\cup D_i\cup K_i$,
\item[(ii)] $\rho(A_i)\ge 5t\rho(B_i)$,
\item[(iii)] $\rho(C_i)\le \tfrac{1+r_i/r+\delta_i}{5t}\rho(G)$, where $\delta_i=0$ if $V(H_i)\neq\emptyset$ and $\delta_i=1$ otherwise,
\item[(iv)] $|D_i|\le r_i\eta_{\le\!n'}(G^{[\prec,m]},5rt)$,
\item[(v)] $K_i$ is the vertex set of a tight depth-$l$ clique minor such that
$\rho(v)\le \tfrac{1}{5bt}\rho(G)$ for every $v\in V(K_i)$, and
\item[(vi)] $r_i\le r$ and every component $M$ of $H_i$ satisfies $\rho(M)\le (8/9)^{r_i}\rho(G)$.
\end{itemize}
We let $A_0=B_0=D_0=K_0=\emptyset$, $r_0=0$, and $C_0=\{v\in V(G):\rho(v)<\tfrac{1}{5tn}\rho(G)\}$;
clearly, $\rho(C_0)\le \tfrac{1}{5t}\rho(G)$, and thus (iii) holds.  All the other invariants are trivially satisfied.

For $i\ge 0$, assuming we already determined
$(A_i,B_i,C_i,D_i,K_i)$, we proceed as follows.
\begin{itemize}
\item[(a)] If $w(R_i)\le \tfrac{2}{3}w(G)$, the construction stops.  By (i),
$B_i\cup C_i\cup D_i\cup K_i$ is a $w$-balanced separator in $G$.  By (ii),
we have $\rho(B_i)\le \tfrac{1}{5t}\rho(A_i)\le\tfrac{1}{5t}\rho(G)$.  Since $K_i$
is a vertex set of a tight depth-$l$ clique minor,
we have $|K_i|\le b$ and $\rho(K_i)\le \tfrac{1}{5t}\rho(G)$ by (v).
Together with (iii) and (vi), this implies $\rho(B_i\cup C_i\cup K_i)\le\rho(G)/t$.
By (iv) and (vi), it follows that the set $B_i\cup C_i\cup D_i\cup K_i$ is $(\rho/t)$-cheap with $r\eta_{\le\!n'}(G^{[\prec,m]},5rt)$ outliers, as required.

From now on, assume $w(R_i)>\tfrac{2}{3}w(G)$.  In particular, $V(R_i)\neq\emptyset$, and by the choice
of $C_0$, $\rho(R_i)\neq 0$.
\item[(b)] If $R_i$ is not a $(w,\rho,5t)$-expander, then let $Z_i\subseteq V(R_i)$ be such that
$w(Z_i)\le w(R_i)/2$ and $\rho(N_{R_i}(Z_i))<\tfrac{1}{5t}\rho(Z_i)$.  Let $A_{i+1}=A_i\cup Z_i$,
$B_{i+1}=B_i\cup N_{R_i}(Z_i)$, $C_{i+1}=C_i$, $D_{i+1}=D_i$, $K_{i+1}=K_i$ and $r_{i+1}=r_i$.
Note (ii) is satisfied by $T_{i+1}$, since it is satisfied by $T_i$ and $\rho(N_{R_i}(Z_i))<\tfrac{1}{5t}\rho(Z_i)$.
Moreover, $w(A_{i+1})\le w(A_i)+w(R_i)/2\le (w(G)-w(R_i))+w(R_i)/2=w(G)-w(R_i)/2<\tfrac{2}{3}w(G)$ since $w(R_i)>\tfrac{2}{3}w(G)$,
and thus $T_{i+1}$ satisfies (i).  All the other invariants are clearly preserved.

From now on, assume $R_i$ is a $(w,\rho,5t)$-expander.

\item[(c)] Let us now consider the case $V(H_i)\neq\emptyset$. If $\rho(H_i)\le \tfrac{1}{5t}\rho(G)$,
then let $T_{i+1}$ be obtained from $T_i$ by setting $C_{i+1}=C_i\cup V(H_i)$.
By (iii), we have $\rho(C_i)\le \tfrac{1+r_i/r}{5t}\rho(G)$, and thus $\rho(C_{i+1})\le \tfrac{2+r_{i+1}/r}{5t}\rho(G)$.
Moreover, $V(H_{i+1})=\emptyset$, and thus $T_{i+1}$ satisfies (iii).   All the other invariants are clearly preserved.
Hence, suppose that $\rho(H_i)>\tfrac{1}{5t}\rho(G)$.

Let $M_i$ be a component of $H_i$ with $\rho(M_i)$ maximum.  We claim that $\rho(M_i)\ge \tfrac{3}{4}\rho(H_i)$.
Indeed, suppose for a contradiction $\rho(M_i)<\tfrac{3}{4}\rho(H_i)$.  Then we can express $H_i$ as a disjoint
union of graphs $H'_i$ and $H''_i$ such that $\rho(H'_i),\rho(H''_i)>\rho(H_i)/4>\tfrac{1}{20t}\rho(G)$.
Since $R_i$ is a $(w,\rho,5t)$-expander, Lemma~\ref{lemma-dist} implies $d_{R_i}(V(H'_i),V(H''_i))\le m$.
But then two vertices of $R_i$ at distance at most $m$ from each other belong to different components of $H_i$,
contradicting the definition of $H_i$.

Therefore, we have $\rho(M_i)>\tfrac{3}{20t}\rho(G)$, and by (vi), $(8/9)^{r_i}>\tfrac{3}{20t}$,
implying $r_i<\log_{9/8}(20t/3)\le r$.

Let $G'_i=R_i^{[\prec,m]/V(H_i)}$, and let $w_i$ be the weight assignment defined by $w_i(v)=\rho(v)$ for $v\in V(H_i)$
and $w_i(v)=0$ otherwise.  By definition we have $G'_i\subseteq G^{[\prec,m]}$.
Moreover, we have $\rho(v)>\tfrac{1}{5bt}\rho(G)$ for each $v\in V(H_i)$,
and thus $|V(H_i)|\le 5bt$ and $V(G'_i)\le |V(H_i)|\wcol^\prec_m(G)\le 5bt\wcol^\prec_m(G)=n'$.
Therefore, $G'_i$ contains a $w_i$-balanced separator $X_i\cup Y_i$, where
$\rho(X_i)\le \tfrac{1}{5rt}\rho(G'_i)$ and $|Y_i|\le \eta_{\le\!n'}(G^{[\prec,m]},5rt)$.
Let $T_{i+1}$ be obtained from $T_i$ by setting $C_{i+1}=C_i\cup X_i$, $D_{i+1}=D_i\cup Y_i$, and $r_{i+1}=r_i+1$.
Clearly, all the invariants except possibly for (vi) are satisfied.

Let us now argue that (vi) holds.  Since $r_i<r$, we have $r_{i+1}\le r$.
Note that $R_{i+1}=R_i-(X_i\cup Y_i)$.
By Lemma~\ref{lemma-sep}, if vertices $u,v\in V(H_i)\setminus (X_i\cup Y_i)$
belong to different components of $G'_i-(X_i\cup Y_i)$, then $d_{R_{i+1}}(u,v)>m$,
and thus $uv\not\in E(H_{i+1})$.  Consequently, any component $M$ of $H_{i+1}$ is contained in a component of $G'_i-(X_i\cup Y_i)$.
Since $X_i\cup Y_i$ is a $w_i$-balanced separator in $G'_i$ and $\rho(M_i)\ge \tfrac{3}{4}\rho(H_i)$,
by the definition of $w_i$ we have $\rho(M)=w_i(M)\le \tfrac{2}{3}w_i(G'_i)=\tfrac{2}{3}\rho(H_i)\le \tfrac{8}{9}\rho(M_i)\le (8/9)^{r_i+1}\rho(G)$ by (vi).
This implies that $T_{i+1}$ satisfies (vi).

From now on, we assume $V(H_i)=\emptyset$.
\item[(d)] If there exists a bag $S$ of the clique minor $K_i$ with no neighbor in $R_i$,
let $T_{i+1}$ be obtained from $T_i$ by setting $A_{i+1}=A_i\cup S$ and $K_{i+1}=K_i\setminus S$.
We have $w(A_{i+1})\le w(G)-w(R_i)<w(G)/3$, and thus $T_{i+1}$ satisfies (i).
All other invariants are clearly satisfied as well.

\item[(e)] Let $S_1$, \ldots, $S_o$ be the bags of $K_i$.  By the previous paragraph, we can for $j=1,\ldots,o$
assume that $S_j$ has a neighbor $v_j\in V(R_i)$.
Fix any vertex $v\in V(R_i)$.  By the choice of $C_0$, we have $\rho(v),\rho(v_j)\ge \tfrac{1}{5tn}\rho(G)$.
Since $R_i$ is a $(w,\rho,5t)$-expander, Lemma~\ref{lemma-dist} implies that $d_{R_i}(v,v_j)\le l$.
Let $S$ be the union of at most $o$ paths of length at most $l$ joining $v$ to the vertices $v_1$, \ldots, $v_o$ in $R_i$.
Let $T_{i+1}$ be obtained from $T_i$ by setting $K_{i+1}=K_i\cup S$.  Clearly, (v) holds, since
$V(H_i)=\emptyset$, and thus $\rho(u)\le \tfrac{1}{5bt}\rho(G)$ for every $u\in V(R_i)$.
\end{itemize}
Note that after each of the operations (b), (c), (d), and (e),
the pair $(|A_{i+1}|+|C_{i+1}|+r_{i+1},|K_{i+1}|)$ is lexicographically strictly greater than $(|A_i|+|C_i|+r_i,|K_i|)$,
and since $|A_j|+|C_j|+r_j\le n+r$ and $|K_j|\le n$ for each $j$, the process necessarily stops after a finite number of steps.
\end{proof}

We now iterate Theorem~\ref{thm-main} $a$ times and use the trivial bound $\eta(H,t)\le |V(H)|$ at the end.
Let us remark that for $r_1,r_2\ge 1$, a graph $G$, and a linear ordering $\prec$ of the vertices of $G$,
if $H$ is a subgraph of $G^{[\prec,r_1]}$, then $H^{[\prec,r_2]}\subseteq G^{[\prec,r_1r_2]}$.
\begin{corollary}\label{cor-iter}
Let $G$ be a graph with $n$ vertices, let $\prec$ be a linear ordering of the vertices of $G$ and let $t\ge 1$ be an integer.
Let $t_0=t$, $n_0=n$, $m_0=1$ and for $i\ge 0$, let
\begin{itemize}
\item $r_i=\lceil\log_{9/8} (20t_i/3)\rceil$, $t_{i+1}=5r_it_i$,
\item $m_{i+1}=2\ell(5t_i,20t_i)m_i$,
\item $l_i=2\ell(5t_i,5t_in_i)$, $b_i=\omega^2_{l_i}(G^{[\prec,m_i]})l_i$, $n_{i+1}=5b_it_i\wcol_{m_{i+1}}^\prec(G)$.
\end{itemize}
For every $a\ge 0$, we have $\eta(G,t)\le n_a\prod_{i=0}^{a-1} r_i$.
\end{corollary}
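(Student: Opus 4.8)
The plan is to prove, by induction on $a$, a slightly more general statement: for all $a\ge 0$ and all $j\ge 0$, every subgraph $H$ of $G^{[\prec,m_j]}$ with $|V(H)|\le n_j$ satisfies $\eta(H,t_j)\le n_{j+a}\prod_{i=j}^{j+a-1}r_i$. The corollary is the case $j=0$, $H=G$, using that $G$ is a subgraph of $G^{[\prec,1]}=G^{[\prec,m_0]}$, that $|V(G)|=n=n_0$, and that $t_0=t$. The base case $a=0$ is just the trivial bound $\eta(H,t_j)\le|V(H)|\le n_j$, the empty product being $1$.

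For the inductive step I would fix $j$ and a subgraph $H\subseteq G^{[\prec,m_j]}$ with $|V(H)|\le n_j$, write $\mu_j=2\ell(5t_j,20t_j)$ so that $m_{j+1}=\mu_j m_j$, and apply Theorem~\ref{thm-main} to $H$ with the restriction of $\prec$ and with $t_j$ in the role of $t$. Its parameters are $l=2\ell(5t_j,5t_j|V(H)|)$, $b=\omega^2_l(H)\cdot l$, $m=\mu_j$, $r=r_j$, and $n'=5bt_j\wcol^\prec_{\mu_j}(H)$, and the main routine work is to check that each is dominated by the corresponding quantity in the corollary: $|V(H)|\le n_j$ and monotonicity of $\ell$ give $l\le l_j$; a depth-$l$ minor of $H$ is a depth-$l_j$ minor of $G^{[\prec,m_j]}$, so $\omega_l(H)\le\omega_{l_j}(G^{[\prec,m_j]})$ and hence $b\le b_j$; and $\wcol^\prec_{\mu_j}(H)\le\wcol^\prec_{m_{j+1}}(G)$, so $n'\le 5b_jt_j\wcol^\prec_{m_{j+1}}(G)=n_{j+1}$. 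Theorem~\ref{thm-main} then gives $\eta(H,t_j)\le r_j\,\eta_{\le n'}(H^{[\prec,\mu_j]},5r_jt_j)$. Since $5r_jt_j=t_{j+1}$, and since $H\subseteq G^{[\prec,m_j]}$ implies $H^{[\prec,\mu_j]}\subseteq G^{[\prec,m_j\mu_j]}=G^{[\prec,m_{j+1}]}$ (the remark preceding the corollary), every subgraph $H'$ of $H^{[\prec,\mu_j]}$ with at most $n'\le n_{j+1}$ vertices is a subgraph of $G^{[\prec,m_{j+1}]}$ with at most $n_{j+1}$ vertices. Applying the induction hypothesis with $a-1$ and $j+1$ to each such $H'$, taking the maximum, and multiplying by $r_j$ yields $\eta(H,t_j)\le r_j\,n_{j+a}\prod_{i=j+1}^{j+a-1}r_i=n_{j+a}\prod_{i=j}^{j+a-1}r_i$, completing the induction.

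The only step that is not pure bookkeeping is the inequality $\wcol^\prec_{\mu_j}(H)\le\wcol^\prec_{m_{j+1}}(G)$ for $H\subseteq G^{[\prec,m_j]}$, which I expect to be the one thing worth spelling out: a $(\prec,\mu_j)$-reachable pair in $H$ is also $(\prec,\mu_j)$-reachable in $G^{[\prec,m_j]}$, and replacing each edge of a witnessing path by the corresponding path of length at most $m_j$ in $G$ produces a walk of length at most $m_j\mu_j=m_{j+1}$ in $G$ all of whose vertices are $\succeq$ its lower endpoint; extracting a subpath shows the pair is $(\prec,m_{j+1})$-reachable in $G$. This is the weak-coloring-number counterpart of the graph-inclusion remark, and it is precisely what allows the iteration to keep all weak coloring numbers measured in $G$ rather than in the successively denser auxiliary graphs $G^{[\prec,m_i]}$; everything else reduces to monotonicity of $\ell$, of the clique-minor function $\omega$, and of $\wcol$, together with the recursive identities $t_{j+1}=5r_jt_j$ and $m_{j+1}=\mu_j m_j$.
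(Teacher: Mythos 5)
Your proposal is correct and is essentially the paper's own argument: the paper proves the corollary by exactly this iteration of Theorem~\ref{thm-main} (together with the remark that $H\subseteq G^{[\prec,r_1]}$ implies $H^{[\prec,r_2]}\subseteq G^{[\prec,r_1r_2]}$) followed by the trivial bound $\eta(H,t)\le|V(H)|$, leaving the bookkeeping implicit. Your write-up just formalizes that iteration as an induction and spells out the monotonicity checks, including the $\wcol$ comparison, all of which are right.
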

Let us estimate the quantities from Corollary~\ref{cor-iter}.  Let $p(r)=c(r+1)^k$ be a polynomial bounding the expansion
of $G$.  We consider $p$ as well as the number of iterations $a$ to be fixed, and thus we hide multiplicative terms depending only
on them in the $O$-notation.  By Lemma~\ref{lemma-exp}, for every $l,m\ge 1$, we have
$$\omega_l(G^{[\prec,m]})=O(\nabla_l(G^{[\prec,m]}))=O\Bigl(\bigl(\wcol_m^\prec(G)\bigr)^2(ml)^{O(1)}\Bigr).$$
We have $r_i=O(\log t_i)$, and thus $t_i=O(t\log^i t)$ for $i\le a$.  Hence,
$m_{i+1}=O(m_it_i\log t_i)=O(m_it\log^{i+1} t)$, and thus $m_i=O(t^{O(1)})$ for $i\le a$.

Note that $n_{i+1}$ is a function of $l_i=O(t_i\log (t_in_i))$, and the logarithm diminishes
the effects of the earlier iterations.  Hence, we can with no great loss use the value of $m_i$ and $t_i$
from the last iteration in all previous iterations as well.  Moreover, we can choose the ordering $\prec$
so that $\wcol^\prec_{m_a}(G)=\wcol_{m_a}(G)$. Let $s=\wcol_{m_a}(G)$, so that $\omega_{l_i}\bigl(G^{[\prec,m_a]}\bigr)=O\bigl(s^2(m_al_i)^{O(1)}\bigr)$.  We have
\begin{align}
n_{i+1}&=O\Bigl(\omega^2_{l_i}\bigl(G^{[\prec,m_a]}\bigr)l_it_as\Bigr)\nonumber\\
&=O\bigl(s^5m_a^{O(1)}t_al^{O(1)}_i\bigr)\nonumber\\
&=O\Bigl(s^5\bigl(t\log \max(2,n_i)\bigr)^{O(1)}\Bigr).\label{eq-nit}
\end{align}
By Lemma~\ref{lemma-wcolw}, we have $\log s=O(m_a\log p(m_a))=O(m_a\log m_a)=O(t^{O(1)})$.
Hence, an inductive argument using (\ref{eq-nit}) implies $n_a=O\Bigl(s^5\bigl(t\log^{(a)}n\bigr)^{O(1)}\Bigr)$.

\begin{corollary}\label{cor-iterpoly}
For every polynomial $p:\mathbb{N}\to\mathbb{N}$ and integer $a\ge 1$, there exists a polynomial $q:\mathbb{N}\to\mathbb{N}$
such that the following holds.  For every graph $G$ and an integer $t\ge 1$,
if $G$ has expansion bounded by $p$, then
$$\eta(G,t)\le \wcol^5_{q(t)}(G) q\bigl(t\log^{(a)}n\bigr).$$
\end{corollary}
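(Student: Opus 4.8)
The plan is simply to assemble the ingredients that have already been built up. First I would apply Corollary~\ref{cor-iter} with the given $a$: for every graph $G$ with expansion bounded by $p$, every linear ordering $\prec$ of $V(G)$, and every $t\ge 1$, it gives $\eta(G,t)\le n_a\prod_{i=0}^{a-1} r_i$, where the quantities $t_i$, $r_i$, $m_i$, $l_i$, $b_i$, $n_i$ are as defined there. Since $p$ and $a$ are fixed, I treat all multiplicative factors depending only on $p$ and $a$ as hidden in the $O$-notation.

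Second, I would carry out the estimates sketched after Corollary~\ref{cor-iter}. From $r_i=O(\log t_i)$ and $t_i=O(t(\log t)^i)$ one gets $\prod_{i=0}^{a-1}r_i=O\bigl((\log t)^{O(1)}\bigr)$ and $m_i=O(t^{O(1)})$ for all $i\le a$. I would then choose $\prec$ so that $\wcol^\prec_{m_a}(G)=\wcol_{m_a}(G)=:s$; since $m_i\le m_a$, also $\wcol^\prec_{m_i}(G)\le s$ and $G^{[\prec,m_i]}\subseteq G^{[\prec,m_a]}$ for all $i\le a$. Using Lemma~\ref{lemma-exp} (together with $\nabla_j(G)\le p(j)$) to bound $\omega_{l_i}\bigl(G^{[\prec,m_a]}\bigr)=O\bigl(s^2(m_al_i)^{O(1)}\bigr)$, the recurrence for $n_{i+1}$ becomes $n_{i+1}=O\bigl(s^5(t\log\max(2,n_i))^{O(1)}\bigr)$. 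Since $\log s=O(m_a\log m_a)=O(t^{O(1)})$ by Lemma~\ref{lemma-wcolw}, an induction on $i$—in which the logarithm in the recurrence successively erases the contribution of the earlier levels—yields $n_a=O\bigl(s^5(t\log^{(a)}n)^{O(1)}\bigr)$.

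Third, I would combine the two bounds: $\eta(G,t)=O\bigl(s^5(t\log^{(a)}n)^{O(1)}(\log t)^{O(1)}\bigr)$. Because $a\ge 1$, we have $\log^{(a)}n\ge 1$, so $t\le t\log^{(a)}n$ and the factor $(\log t)^{O(1)}$ is absorbed, giving $\eta(G,t)\le C\,s^5 (t\log^{(a)}n)^D$ for constants $C$ and $D$ depending only on $p$ and $a$. It then suffices to fix the polynomial $q$ large enough that $q(t)\ge m_a$ for all $t\ge 1$ (possible since $m_a$ is bounded by a polynomial in $t$ not depending on $n$) and $q(y)\ge Cy^D$ for all $y\ge 1$. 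The first condition, together with monotonicity of $\wcol_r(G)$ in $r$, gives $s=\wcol_{m_a}(G)\le\wcol_{q(t)}(G)$; the second gives $C(t\log^{(a)}n)^D\le q(t\log^{(a)}n)$. Hence $\eta(G,t)\le\wcol^5_{q(t)}(G)\,q(t\log^{(a)}n)$, as desired.

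I do not expect a genuine obstacle in this step—the substance lies in Theorem~\ref{thm-main} and in the inductive estimate preceding the corollary. The only thing requiring care is the bookkeeping: verifying that every suppressed constant, as well as $\prod_{i=0}^{a-1}r_i$ and the depth $m_a$, is bounded by a polynomial in $t$ that is independent of $n$, and that one polynomial $q$ can simultaneously control the depth $m_a$ appearing inside $\wcol$ (a function of $t$) and the polynomial factor governing the remainder of the bound (a function of $t\log^{(a)}n$).
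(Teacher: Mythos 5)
Your proposal is correct and follows essentially the same route as the paper: the corollary is obtained exactly by applying Corollary~\ref{cor-iter}, running the estimates given right after it (choosing $\prec$ to optimize $\wcol_{m_a}$, deriving the recurrence $n_{i+1}=O\bigl(s^5(t\log\max(2,n_i))^{O(1)}\bigr)$, and inducting to get $n_a=O\bigl(s^5(t\log^{(a)}n)^{O(1)}\bigr)$), and then absorbing $\prod_i r_i$ and the constants into a single polynomial $q$ that also dominates $m_a$. The extra bookkeeping you spell out (monotonicity of $\wcol_r$ in $r$, using $a\ge 1$ to absorb the $(\log t)^{O(1)}$ factor) is exactly what the paper leaves implicit.
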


Theorem~\ref{thm-iterweak} follows from Corollary~\ref{cor-iterpoly}; we use the fact that $q(\log^{(a+1)}n)=O(\log^{(a)} n)$
to make the dependency on $\log^{(a)} n$ linear.

\bibliographystyle{siam}
\bibliography{../data}

\section*{Appendix}

Let $G$ be a graph and let $\prec$ be a linear ordering of its vertices.
For a vertex $v\in V(G)$ and an integer $r\ge 0$, let $\kappa^\prec_r(v)$ denote the maximum number of paths
of length at most $r$ in $G$ starting in $v$, pairwise disjoint except for $v$, and ending in $\{x\in V(G):x\prec v\}$.
Let $\adm^\prec_r(G)=\max\{\kappa^\prec_r(v):v\in V(G)\}$.  The \emph{$r$-admissibility} $\adm_r(G)$ of $G$
is the minimum of $\adm^\prec_r(G)$ over all linear orderings $\prec$ of $V(G)$.
By a detour via another notion (strong $r$-coloring number), it is easy to see that $\wcol^\prec_r(G)\le \Bigl(\adm^\prec_r(G)\Bigr)^{r^2}$,
see e.g.~\cite{apxdomin} for details.  However, a better bound follows by a direct argument.

\begin{lemma}\label{lemma-admw}
Let $G$ be a graph and let $\prec$ be a linear ordering of its vertices.  Then for every $r\ge 1$, we have
$$\wcol^\prec_r(G)\le \Bigl(r^2\adm_r^\prec(G)\Bigr)^r.$$
\end{lemma}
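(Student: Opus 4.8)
The plan is to bound $\wcol^\prec_r(G)$ by counting, for a fixed vertex $v$, the vertices $u\preceq v$ that are $(\prec,r)$-reachable from $v$, grouping them by the length $i\in\{1,\dots,r\}$ of the shortest witnessing path from $v$. Let me write $L^\prec_{r}(v)$ for the reachable set and, for each $i$, let $N_i$ be the set of $u\in L^\prec_r(v)$ whose shortest $(\prec,r)$-reachable path from $v$ (a path all of whose internal vertices are $\succeq u$) has exactly $i$ edges. Since $v$ itself is counted, $|L^\prec_r(v)|\le 1+\sum_{i=1}^r |N_i|$, so it suffices to prove $|N_i|\le (r\adm^\prec_r(G))\cdot\text{something}$ that multiplies out to at most $(r^2\adm^\prec_r(G))^r$; concretely I would aim for a recursive bound of the shape $|N_i|\le i\adm^\prec_r(G)\cdot\bigl(1+\sum_{j<i}|N_j|\bigr)$ or similar, and then solve the recursion.

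The key step is a Menger/fan argument. Fix $i$ and consider $u\in N_i$ with witnessing path $P_u$ from $v$ to $u$ of length $i$. The internal vertices of $P_u$ all lie in $\bigcup_{j<i} N_j$ (they are $(\prec,r)$-reachable from $v$ via a strictly shorter prefix, and they are $\succ u$... wait, they are $\succeq u$ but could equal later... more carefully: each internal vertex $x$ satisfies $u\preceq x$, and the prefix of $P_u$ from $v$ to $x$ witnesses $x\in L^\prec_{i-1}(v)$, hence $x\in N_j$ for some $j\le i-1$). So let $W_{i-1}=\bigcup_{j=1}^{i-1} N_j$, a set of size $\sum_{j<i}|N_j|$. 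Each $u\in N_i$ is reached from $v$ by a path whose interior lies in $W_{i-1}$. Now I would apply Menger's theorem to the set of paths from $v$ to $N_i$ with interior in $W_{i-1}$: either there are many ($>$ threshold) internally-disjoint such paths, or there is a small cut $Q\subseteq W_{i-1}\cup N_i$ separating $v$ from $N_i$. The admissibility bound controls internally-disjoint paths from $v$ ending below $v$: there can be at most $\adm^\prec_r(G)$ of them of length $\le r$. But the endpoints in $N_i$ need not all be $\prec v$ along the whole path — they are $\preceq v$, and the interior vertices of the disjoint system... this is exactly why one routes through the vertices of $N_i$ themselves as "below" endpoints: each vertex of $W_{i-1}\cup N_i$ is $\succeq$ its own reachable endpoint but $\preceq v$.

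The clean way: for each $u\in N_i$, among all $v$–$u$ paths of length $\le r$ with interior in $W_{i-1}$, Menger gives a minimum vertex cut $Q_u$ separating $v$ from $u$ inside this path system, with $|Q_u|$ equal to the max number of internally disjoint such paths, which is at most $\adm^\prec_r(G)$ since concatenating disjoint fans gives disjoint paths from $v$ of length $\le r$ ending at vertices $\prec v$ (here one needs the cut vertices to be $\prec v$: since every vertex on any such path is $\preceq v$ and $\ne v$ if internal, a cut consisting of internal vertices works; the degenerate case $u$ adjacent-ish to $v$ is handled by the $+1$). Then $u$ is "dominated" by $Q_u\subseteq W_{i-1}$, and a counting argument — each vertex $w\in W_{i-1}$ can lie in $Q_u$ for at most $r\adm^\prec_r(G)$ many $u$ (a fan argument from $w$) — yields $|N_i|\cdot 1\le |W_{i-1}|\cdot r\adm^\prec_r(G)\cdot(\text{const})$, hence $|N_i|\le r\adm^\prec_r(G)\,(1+\sum_{j<i}|N_j|)$ roughly. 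Solving this recurrence with $|N_0|=1$ gives $1+\sum_{i\le r}|N_i|\le (1+r\adm^\prec_r(G))^r$, which is at most $(r^2\adm^\prec_r(G))^r$ for $r\ge1$, completing the bound; taking the minimum over $\prec$ on both sides (or noting the ordering is shared) gives the lemma as stated. The main obstacle I expect is getting the two fan arguments (one bounding $|Q_u|$ by $\adm$, one bounding how many $u$'s share a given cut vertex) to both respect the ordering constraint "all vertices $\preceq v$" simultaneously and to combine paths into genuinely internally-disjoint length-$\le r$ paths from a single source ending strictly below it, so that the definition of $\adm^\prec_r$ actually applies; handling the vertex $v$ itself and very short paths cleanly (the source of the "$+1$" and the "$r^2$ versus $r$" slack) is the other fiddly point.
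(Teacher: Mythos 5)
Your proposal does not follow the paper's argument, and it has a fatal flaw at its foundation. You claim that every internal vertex $x$ of a witnessing path $P_u$ for $u\in L^\prec_r(v)$ is itself weakly reachable from $v$ (via the prefix of $P_u$), so that the interiors of all witnessing paths lie in $W_{i-1}=\bigcup_{j<i}N_j$. This is false: the definition of $(\prec,r)$-reachability of $u$ only requires $u\preceq x$ for every $x\in V(P_u)$; it says nothing about how $x$ compares to the \emph{other} vertices of the prefix, nor to $v$. Concretely, take a path $v,a,u$ with $u\prec v\prec a$: then $u\in L^\prec_2(v)$, but the internal vertex $a$ satisfies $a\succ v$ and is not weakly reachable from $v$ at any radius. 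Since witnessing paths may route through arbitrarily many vertices lying above $v$ in the order, your sets $N_j$ do not contain the path interiors, the cuts $Q_u$ produced by Menger need not lie in $W_{i-1}$, and the recursion $|N_i|\le r\adm^\prec_r(G)\bigl(1+\sum_{j<i}|N_j|\bigr)$ has no basis. (You flagged exactly this point with your ``wait\ldots more carefully'' aside, but the ``more careful'' version is the incorrect one.) The secondary steps you leave as sketches --- that a fan of internally disjoint $v$--$u$ paths yields $\kappa^\prec_r(v)$-type paths ending strictly below $v$, and that each cut vertex $w$ serves at most $r\adm^\prec_r(G)$ many $u$'s --- also need real arguments, but they are moot until the containment issue is fixed.

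For comparison, the paper avoids this trap by never trying to locate the path interiors inside the reachable set. It lifts everything to a layered digraph $\vec H$ on $V(G)\times\{0,\dots,r\}$, extracts a \emph{minimal} subgraph $\vec T$ containing, for each $u\in L^\prec_r(v)$, a walk from $(v,0)$ to $(u,r)$ whose $\prec$-minimum is $u$, proves by an exchange argument that $\vec T$ is an outbranching of depth $r$, and then bounds its maximum outdegree by $r^2\adm^\prec_r(G)$: the subtrees below a vertex $x$ are vertex-disjoint in $\vec H$, and after projecting to $G$ an independent set in a conflict graph of maximum degree $r(r-1)$ yields genuinely disjoint paths from $\pi(x)$ ending below $\pi(x)$, to which the definition of $\adm^\prec_r$ applies. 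The leaf count $(r^2\adm^\prec_r(G))^r$ then follows from depth times outdegree. If you want to rescue a layer-by-layer recursion, you would need some device playing the role of this tree --- something that charges the branching at each step to disjoint descending paths rooted at the branch vertex itself, rather than to membership of interior vertices in $L^\prec_r(v)$.
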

\begin{proof}
Let $\vec{H}$ be the auxiliary directed graph
with vertex set $V(G)\times \{0,\ldots, r\}$, edges $((u,i),(v,i+1))$ and $((v,i),(u,i+1))$ for each $uv\in E(G)$ and $0\le i\le r-1$,
and edges $((v,i),(v,i+1))$ for each $v\in V(G)$ and $0\le i\le r-1$.
For a vertex $(v,i)\in V(\vec{H})$, let $\pi(v,i)=v$, and for a path $Q$ in $\vec{H}$, let $m(Q)=\min_\prec \pi(V(Q))$.

Consider any vertex $v\in V(G)$ and let $\vec{T}$ be a minimal subgraph of $\vec{H}$ containing for each $u\in L^\prec_r(v)$
a path $P$ from $(v,0)$ to $(u,r)$ such that $m(P)=u$.
We claim that $\vec{T}$ has maximum indegree at most one.  Indeed, for any vertex $x\in V(\vec{T})$,
there must by the minimality of $\vec{T}$ exist a path $Q$ in $\vec{T}$ from $(v,0)$ to $x$.  Choose such a path $Q$
with $m(Q)$ maximum, and let $e$ be the last edge of $Q$.  If an edge $e'\neq e$ entered $x$, then $\vec{T}-e'$ would
contradict the minimality of $\vec{T}$, since in any path $P$ from $(v,0)$ in $\vec{T}$ containing the edge $e'$, we can replace the initial
segment by $Q$ without decreasing $m(P)$.

Therefore, $\vec{T}$ is an outbranching.  Furthermore, consider any vertex $x\in \vec{T}$, and let $P_1$, \ldots, $P_t$
be paths in $\vec{T}$ from $x$ to the leaves of $\vec{T}$ starting with pairwise different edges.  By the minimality of $\vec{T}$,
for $1\le i\le t$, the path $P_i$ has length at most $r$ and satisfies $m(P_i)\prec \pi(x)$.  Moreover, the paths
$P_1-x$, \ldots, $P_t-x$ are pairwise vertex-disjoint.  Let $F$ be an auxiliary graph with vertex set $\{1,\ldots,t\}$
and with $ij\in E(F)$ if and only if $\pi(V(P_i-x))\cap \pi(V(P_j-x))\neq\emptyset$.  Note that $F$ has maximum degree at most $r(r-1)\le r^2-1$,
and thus $F$ contains an independent set of size at least $t/r^2$.  On the other hand,
$\alpha(F)\le \kappa^\prec_r(\pi(x))\le\adm_r^\prec(G)$.  Hence, the maximum outdegree of $\vec{T}$ is at most
$r^2\adm_r^\prec(G)$.  Consequently, the number of leaves of $\vec{T}$, which equals $|L^\prec_r(v)|$, is
at most $\Bigl(r^2\adm_r^\prec(G)\Bigr)^r$.
\end{proof}

Grohe et al.~\cite{covcol} proved the following bound on the admissibility.
\begin{theorem}[Grohe et al.~\cite{covcol}, a consequence of Theorem~3.1]\label{thm-covcol}
For every graph $G$ and integer $r\ge 1$, we have $\adm_r(G)\le 6r\nabla_r^3(G)$.
\end{theorem}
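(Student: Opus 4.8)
The bound is quoted from Grohe et al.~\cite{covcol} as a consequence of their Theorem~3.1, so the plan is to extract it from there rather than to reprove it from scratch. Their result bounds $\adm_r(G)$ by an explicit expression in $r$ and the shallow-minor densities of $G$ up to depth $r$. The only step I would actually carry out is the normalization: since $\nabla_0(G)\le\nabla_1(G)\le\cdots\le\nabla_r(G)$ and every shallow minor obtained by contracting short paths is in particular a depth-$r$ minor, one may replace each density appearing in Theorem~3.1 by $\nabla_r(G)$; substituting and simplifying then collapses the expression into the stated form $6r\nabla_r^3(G)$. Checking that the numerical constant indeed comes out to be at most $6$ is the one piece of routine arithmetic involved.

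For orientation, let me recall the mechanism behind Theorem~3.1 of~\cite{covcol}, since that is what is really doing the work. One builds the ordering $\prec$ greedily, repeatedly deleting from the current graph a vertex from which the fewest internally-disjoint paths of length at most $r$ emanate, and reading the ordering off in reverse; up to a small factor accounting for the difference between paths in $G$ and paths in the current subgraph, $\adm_r^\prec(v)$ is then controlled by the value of this greedy parameter at the moment $v$ is removed. Hence it is enough to exhibit, in every subgraph $H\subseteq G$, a vertex that starts only $O(r\nabla_r^3(G))$ such internally-disjoint paths. If instead every vertex of $H$ started many internally-disjoint short paths, then a counting argument — bounding the number of length-at-most-$r$ paths leaving a fixed vertex and summing over $V(H)$ — yields a subgraph which, after contracting these short paths to edges (with the standard cleanup that makes them vertex-disjoint), is a depth-$r$ minor of density exceeding $\nabla_r(G)$, a contradiction. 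The factor $r$ is the path-length bookkeeping, and the cubic dependence on $\nabla_r(G)$ is the loss incurred in passing between the ``many disjoint short paths from every vertex'' condition and an honest dense shallow minor.

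The main obstacle is entirely quantitative: obtaining precisely the constant $6$ and the exponent $3$, rather than some coarser polynomial in $r$ and $\nabla_r(G)$, requires the careful accounting carried out in~\cite{covcol}. I would therefore treat Theorem~3.1 of~\cite{covcol} as a black box and add on top of it only the monotonicity substitution $\nabla_i(G)\le\nabla_r(G)$, which turns their statement into the form needed here; this is also the form that feeds, via Lemma~\ref{lemma-admw}, the $3r$ exponent in Lemma~\ref{lemma-wcolw}.
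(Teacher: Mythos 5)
The paper itself offers no proof of this statement: it is quoted verbatim as a consequence of Theorem~3.1 of Grohe et al.~\cite{covcol}, exactly as you propose to do, with at most the routine monotonicity substitution $\nabla_i(G)\le\nabla_r(G)$ to put it in the stated form. Your proposal therefore matches the paper's treatment; the additional sketch of the greedy-ordering mechanism is background rather than part of the argument and need not be carried out here.
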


The first inequality from Lemma~\ref{lemma-wcolw} is obtained by combining Theorem~\ref{thm-covcol} with Lemma~\ref{lemma-admw}.
For the second one, see~\cite[Observation~10]{espsublin}.

\end{document}